\newcommand{\Br}{{\mathrm{Br}}}
\newcommand{\Der}{{\mathrm{Der}}}
\newcommand{\rmc}{{\mathrm{c}}}
\newcommand{\rms}{{\mathrm{s}}}
\newcommand{\rmII}{\mathrm{II}}
\newcommand{\rAd}{\mathrm{Ad}}
\newcommand{\bbC}{{\mathbb{C}}}
\newcommand{\bbH}{{\mathbb H}}
\newcommand{\bbO}{{\mathbb O}}
\newcommand{\bbR}{{\mathbb R}}
\newcommand{\bbZ}{{\mathbb Z}}
\newcommand{\ok}{{\overline{k}}}
\newcommand{\rA}{{\mathrm{A}}}
\newcommand{\rE}{{\mathrm{E}}}
\newcommand{\rF}{{\mathrm{F}}}
\newcommand{\rG}{\mathrm{G}}
\newcommand{\rM}{{\mathrm{M}}}
\newcommand{\SL}{{\mathrm{SL}}}
\newcommand{\GL}{{\mathrm{GL}}}
\newcommand{\frake}{{\mathfrak{e}}}
\newcommand{\frakf}{{\mathfrak{f}}}
\newcommand{\frakg}{{\mathfrak{g}}}
\newcommand{\frakh}{{\mathfrak{h}}}
\newcommand{\frakk}{{\mathfrak k}}
\newcommand{\fraksl}{{\mathfrak{sl}}}
\newcommand{\frakso}{{\mathfrak{so}}}
\newcommand{\fraksp}{{\mathfrak{sp}}}
\newcommand{\fraksu}{{\mathfrak{su}}}
\newcommand{\frakp}{{\mathfrak p}}
\newcommand{\Aut}{{\mathrm{Aut}}}
\newtheorem{lemma}[subsection]{Lemma}
\newtheorem{prop}[subsection]{Proposition}
\newtheorem{thm}[subsection]{Theorem}
\newcommand{\calA}{{\mathcal{A}}}
\newcommand{\calC}{{\mathcal{C}}}
\newcommand{\calH}{{\mathcal{H}}}
\newcommand{\calJ}{{\mathcal{J}}}
\begin{document}

\subjclass[2010]{11R34, 17B45, 17B10}

\title[Exceptional dual pairs]{rational forms of exceptional dual pairs}

\author{Hung Yean Loke}

\author{Gordan Savin}

\address{Hung Yean Loke, Department of Mathematics,
National University of Singapore,
   21 Lower Kent Ridge Road,
   Singapore 119077.}
\email{matlhy@nus.edu.sg}
\address{Gordan Savin, Department of
  Mathematics, University of Utah, Salt Lake City, UT 84112}
\email{savin@math.utah.edu}

\begin{abstract}
Let $\frakg$  be a simple Lie algebra of type $\rF_4$ or $\rE_n$ defined over a local or global field $k$ of characteristic zero. 
We show that $\frakg$ can be obtained by the Tits construction from an octonion algebra $\bbO$ and a cubic Jordan algebra $\calJ$. 
In particular,  $\frakg$ contains a dual pair $\frakh$ defined over $k$ which is the direct sum of the derivation algebras of $\bbO$ and $\calJ$. 
We determine the conjugacy classes of $k$-forms of $\frakh$ in $\frakg$.

\end{abstract}

\keywords{Exceptional algebraic groups, dual
  pairs, Galois cohomology.}

\maketitle

\section{Introduction} \label{S1} 

\subsection{}
Let $k$ be a field of characteristic 0. Let $\bbO$ be an octonion $k$-algebra and  let $\Der(\bbO)$ be the Lie algebra of derivatives. 
It is a simple Lie algebra of exceptional type $\rG_2$.  
Let $\calJ$ be a central simple cubic Jordan $k$-algebra and let  $\Der(\calJ)$ be the Lie algebra of derivatives. 
Let $\bbO_0$ be the set of trace 0 elements in $\bbO$, and let $\calJ_0$ be the set of trace 0 elements in $\calJ$. 
Then $\frakh=\Der(\bbO)\oplus\Der(\calJ)$ acts on $\frakh^{\perp}=\bbO_0\otimes\calJ_0$.  It is an absolutely irreducible 
representation of $\frakh$. On the vector space 
\[
\frakg=\frakg(\bbO,\calJ)=\frakh\oplus \frakh^{\perp} 
\] 
Tits \cite{T1} defines a Lie structure such that $\frakh$ is a subalgebra, and the bracket of $x\in\frakh$ and $y\in\frakh^{\perp}$ 
is the action of $\frakh$ on $\frakh^{\perp}$.  The Lie algebra $\frakg$ is simple, and of  
type $\rF_4$ or $\rE_n$, depending on the Jordan algebra.  The centralizer of $\Der(\bbO)$ in $\frakg$ is $\Der(\calJ)$, and 
vice versa. In other words, the two simple factors of $\frakh$ form a dual pair in $\frakg$. 
 
The algebra $\frakh$ is a maximal subalgebra in $\frakg$. If $k$ is an algebraically closed field, Dynkin \cite{Dy} showed that any subalgebra 
of $\frakg$ isomorphic to $\frakh$ is conjugated to $\frakh$ by an automorphism of $\frakg$.  Let $A=\Aut(\frakg)$ be the group of 
automorphisms of $\frakg$. 
Let $X(\bar k)$ denote the set of all
$A(\bar k)$-conjugates of $\frakh \otimes \bar k$.  Then 
$X(\ok) = A(\ok)/B(\ok)$ where $B$ is the normalizer of $\frakh$ in $A$.  It  is an algebraic variety
defined over $k$ (see \cite{B}). We have an exact sequence of pointed sets 
\[
1 \rightarrow B \rightarrow A \rightarrow X \rightarrow 1
\]
where $g \in A(\ok)$ maps to $\rAd_g(\frakh \otimes \ok)$ in
$X(\ok)$, and the corresponding long exact sequence of pointed sets
\[
\xymatrix{1 \ar[r] & B(k) \ar[r] & A(k) \ar[r] & X(k) \ar[r] &
  H^1(k,B) \ar[r]^\gamma & H^1(k,A)}
\] 
The group $B$ acts on $\frakh$ by conjugation i.e. we have a canonical map from $B$ to $\Aut(\frakh)$. This map is an 
isomorphism. It follows that the sets $H^1(k,B)$ and $H^1(k,A)$ classify forms of $\frakh$ and $\frakg$, respectively. It is 
not too difficult to see that the map $\gamma$ is the Tits construction.  Thus, in order to describe $B(k)$-orbits on $X(k)$, i.e. rational 
conjugacy classes of dual pairs, it suffices to determine the fibers of the Tits construction.
If $k$ is a local field, the map $\gamma$  can be completely described. It is always a surjection. In fact, it is a bijection 
unless $k\cong\bbR$ or $\frakg$ is of type $\rF_4$. 
For a global field $k$ we have the following.

\begin{thm} \label{main}
Let $k$ be a number field. Let $S$ be the set of archimedean places, i.e.  $k_{\infty}=k\otimes\bbR=\prod_{v\in S} k_v$. 
The map $\gamma: H^1(k,B)\rightarrow H^1(k,A)$ is surjective. If 
$A$ is not of type $\rF_4$ then the fibers of $\gamma$ are finite. More precisely, if the fiber of $[\frakg]\in  H^1(k,A)$
consists of  $n$ distinct classes $[\frakh_1], \ldots, [\frakh_n]\in H^1(k,B)$, then the fiber of $[\frakg\otimes\bbR]\in H^1(k_{\infty},A)=\prod_{v\in S} H^1(k_v, A)$
consists of $n$ distinct  classes  $[\frakh_1\otimes\bbR], \ldots, [\frakh_n\otimes\bbR]\in H^1(k_{\infty},B)=\prod_{v\in S} H^1(k_v, B)$. 
 For any finite place $v$ of $k$,  $[\frakh_1\otimes k_v]=\ldots =[\frakh_n\otimes k_v]\in H^1(k_v,B)$.  
\end{thm}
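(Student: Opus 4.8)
The plan is to study the fibres of $\gamma$ through the commutative square of localization maps
\[
\xymatrix{
H^1(k,B) \ar[r]^{\gamma} \ar[d] & H^1(k,A) \ar[d] \\
\prod_v H^1(k_v,B) \ar[r]^{\prod_v \gamma_v} & \prod_v H^1(k_v,A)
}
\]
feeding in the local description of $\gamma_v$ established earlier: $\gamma_v$ is surjective for every $v$, it is bijective whenever $k_v\cong\bbC$ and whenever $v$ is finite and $A$ is not of type $\rF_4$, and, for $A\not\cong\rF_4$, its only non-trivial fibres occur at the real places. Since only non-empty fibres matter, I may fix a base point and, twisting $A$ and $B$ by a representing cocycle, identify a fibre $\gamma^{-1}([\frakg])$ with the kernel of the twisted Tits map, i.e. with the set of rational conjugacy classes of dual pairs in the corresponding form of $\frakg$; so it suffices to analyse such a kernel and its localizations.

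First I would dispose of the finite places. If $[\frakh_i]$ and $[\frakh_j]$ lie in the same fibre of $\gamma$ and $v$ is a finite place, then $\gamma_v([\frakh_i\otimes k_v])=[\frakg\otimes k_v]=\gamma_v([\frakh_j\otimes k_v])$, and injectivity of $\gamma_v$ at finite places, where the hypothesis $A\not\cong\rF_4$ enters, forces $[\frakh_i\otimes k_v]=[\frakh_j\otimes k_v]$. This is exactly the last assertion of the theorem, and it shows that localization carries the fibre into $\prod_{v\in S} H^1(k_v,B)$, with image inside $\prod_{v\in S}\gamma_v^{-1}([\frakg\otimes k_v])$. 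Finiteness of the fibre is then immediate, each archimedean factor $\gamma_v^{-1}([\frakg\otimes k_v])$ being finite and $S$ finite. It is here that $\rF_4$ must be excluded: for $A$ of type $\rF_4$ the type-$A_1$ factor of $\frakh$ is governed by a quaternion algebra, $\gamma_v$ is no longer injective at the finite places, and varying the local ramification produces infinitely many global twists in a single fibre.

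It remains to prove that the map
\[
\gamma^{-1}([\frakg]) \longrightarrow \prod_{v\in S}\gamma_v^{-1}([\frakg\otimes k_v])
\]
is a bijection. For injectivity, two classes with equal archimedean localizations also agree at every finite place by the previous paragraph, hence have equal image under $H^1(k,B)\to\prod_v H^1(k_v,B)$, and I would conclude they coincide by the Hasse principle for the twisted forms of $B$. Here $B=\rG_2\times\Aut(\Der(\calJ))$: the $\rG_2$-factor is simply connected, so its Tate--Shafarevich set is trivial, while the second factor is treated through the underlying algebra, via Albert--Brauer--Hasse--Noether for the relevant central simple algebra and the Hasse principle for hermitian forms (the $\rF_4$ occurring when $A$ is of type $\rE_8$ is simply connected and causes no trouble). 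For surjectivity, given $(\beta_v)_{v\in S}$ with $\beta_v\in\gamma_v^{-1}([\frakg\otimes k_v])$, I would realize it globally: using the bijection $H^1(k,\rG_2)\xrightarrow{\sim}\prod_{v\in S}H^1(k_v,\rG_2)$ and the analogous realization for the cubic Jordan data, construct an octonion algebra and a cubic Jordan algebra whose archimedean localizations are the prescribed $\beta_v$ and whose finite localizations match the base point of the fibre; away from $\rF_4$ there is no global obstruction coupling these local choices. The resulting form of $\frakh$ localizes under $\gamma$ to $[\frakg\otimes k_v]$ at every place, hence equals $[\frakg]$ by the Hasse principle for forms of $\frakg$, valid for the exceptional types at hand, and so lies in $\gamma^{-1}([\frakg])$.

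Finally, surjectivity of $\gamma$ itself follows the same pattern: every form $\frakg'$ contains a dual pair over each completion $k_v$ by local surjectivity, so the variety $X=A/B$ of dual pairs has points everywhere locally, and a $k$-rational point is produced from the Hasse principle for this homogeneous space. The main obstacle, in my view, is the local--global realization underlying the surjectivity step: one must control the Hasse principle and weak approximation for the twisted adjoint factors of $B$ (types $A_2$ and $C_3$) and for the adjoint forms of $A$ (types $\rE_6,\rE_7$), handle the outer form in type $A_2$, and verify that no arithmetic constraint, of the kind the quaternionic product formula imposes in the $\rF_4$ case, links the archimedean data. Translating the group cohomology into the classical arithmetic of octonion algebras, central simple algebras with involution, and cubic Jordan algebras, where the requisite local--global theorems are available, is what makes this step tractable.
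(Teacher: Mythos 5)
Your localization strategy and your treatment of the finite places (via the bijectivity of $\gamma_v$ there, which is Proposition \ref{P73}) are fine, but there is a genuine gap at the heart of your injectivity step. You deduce that two classes with the same localizations at every place must coincide ``by the Hasse principle for the twisted forms of $B$,'' handling the non-$\rG_2$ factor ``through the underlying algebra via Albert--Brauer--Hasse--Noether.'' That Hasse principle is false: $B\cong\Aut(\frakh)$ is disconnected, and $H^1(k,B)\to\prod_v H^1(k_v,B)$ is not injective in general. Concretely, in the inner $\rE_6$ case the relevant factor of $\frakh$ is $\fraksl_1(\calA)$ for a degree $3$ central simple algebra $\calA$, and $\fraksl_1(\calA)\cong\fraksl_1(\calA')$ if and only if $\calA'\cong\calA$ or $\calA'\cong\calA^{\opp}$. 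Taking $\calA$ with local invariants $(\tfrac13,\tfrac23,\tfrac13,\tfrac23)$ at four finite places and $\calA'$ with invariants $(\tfrac23,\tfrac13,\tfrac13,\tfrac23)$ (both sums vanish, so both algebras exist) yields Lie algebras isomorphic at every place but not over $k$: the sign in $\pm\mathrm{inv}_v$ may change from place to place locally but not globally. These two happen to lie in different fibers of $\gamma$, so the theorem survives; but your argument never uses that the classes lie in the same fiber at this stage, so as written it proves a false statement. The same objection applies to your appeal to ``the Hasse principle for forms of $\frakg$'' at the end of the surjectivity step ($A$ is likewise disconnected in type $\rE_6$, with an entirely analogous counterexample built from the Tits algebra $\beta_G(\varpi)$ versus $\beta_G(\varpi')$), and to the Hasse principle for the homogeneous space $X=A/B$, whose stabilizers are disconnected and which is not covered by the standard local--global theorems.

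The missing idea is exactly the paper's reduction from the disconnected groups $B\cong\Aut(\frakh)$ and $A=\Aut(\frakg)$ to their identity components $H$ and $G$: Proposition \ref{P:outer}(ii) shows that $\frakg$ admits a $k$-rational outer automorphism precisely when $\frakh$ does, and Lemmas \ref{L:reduction} and \ref{L:reduction2} then identify fibers of $\gamma$ with fibers of $\alpha:H^1(k,H)\to H^1(k,G)$. For the connected adjoint groups the localization maps are injective (\cite{PR}, Theorem 6.22) and their images are described by Kottwitz via the coboundaries into $H^2(k,C)$, after which a diagram chase gives both surjectivity and the description of the fibers. In other words, the $\calA\leftrightarrow\calA^{\opp}$ ambiguity that breaks the Hasse principle for $B$ is exactly matched by the corresponding ambiguity for $A$, and one must couple the two component groups before localizing; your proposal localizes first and loses this coupling.
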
 
\noindent The proof of this theorem is an application of the Hasse principle, keeping in mind that  the groups $B$ and $A$ are not necessarily connected, which presents additional difficulties.

\subsection*{Acknowledgment}
The first author enjoyed hospitality of the University of Utah  and Tata Institute in Mumbai while writing this paper. 
The second author would like to thank Skip Garibaldi for enlightening  discussions, and NSF for continued support.

\section{Preliminaries}

\subsection{Galois cohomology}  \label{S2.1} 
Let $\frakg$ be a semi-simple Lie algebra over $k$.  Let $G$  and $A$ be the groups of inner 
and outer automorphisms of $\frakg$, respectively.  The Galois cohomology  $H^1(k, A)$ classifies $k$-forms of $\frakg$. 
The forms of $\frakg$ whose cohomology classes are in the image of the map $j: H^1(k,G)\longrightarrow H^1(k,A)$ 
are called inner forms. The map $j$ is not injective in general, so the elements in $H^1(k,G)$ classify inner forms of 
$\frakg$ with some additional information. 
Assume that 
we have a short exact sequence 
\[ 
\xymatrix{1\ar[r] &  G(\bar k) \ar[r] & A(\bar k) \ar[r] & \bbZ/2 \bbZ \ar[r]  &1 &} 
\] 
and, hence,  a long exact sequence  
\[
\xymatrix{1 \ar[r] & G(k) \ar[r] & A(k) \ar[r] & H^0(k, \bbZ/ 2
  \bbZ)\ar[r] & H^1(k,G) \ar[r]^j& H^1(k,A )}
\] 
of pointed sets in Galois cohomology.  The Galois group $\Gamma_k$
must act trivially on $\bbZ/2\bbZ$ so $H^0(k,\bbZ/2\bbZ) =
\bbZ/2\bbZ$.

 Let $[\frakg'] \in H^1(k,A)$ be in the image of $j$. By the standard twisting argument, 
the exact sequence shows that $j^{-1}([\frakg'])$ consist of one or two points. It consist of one point if and only if 
$A'(k)/G'(k)$ is nontrivial. i.e. $\frakg'$ has an outer  automorphism defined over $k$. The image of $j$ contains a unique 
quasi split form $\frakg^{qs}$.  It is well known that $\frakg^{qs}$ has an outer automorphism defined over $k$. Hence $H^1(k,G)$ 
contains a unique point $c_G$ such that $j(c_G)=[\frakg^{qs}]$.

\subsection{Lie algebras and groups of type $\rG_2$} \label{S25}
Let $\bbO$ be an octonion $k$-algebra. The space of derivatives $\frakg_2 = \Der(\bbO)$ is a simple Lie $k$-algebra of type
$\rG_2$, and $G = \Aut(\frakg_2)=\Aut(\bbO)$ is a connected, simply connected algebraic $k$-group of type
$\rG_2$. Hence $H^1(k,G)$ classifies any of the following the collections of objects (see page 190 in \cite{Se}):
\begin{enumerate}[(a)]
\item Octonion $k$-algebras.
 
\item Lie $k$-algebras of type $\rG_2$. 
 
\item Connected algebraic groups of type $\rG_2$ over $k$.
\end{enumerate}

\subsection{Jordan algebras} \label{S26} Let $\calA$ be a simple
$k$-algebra.  Let $\calA^+$ be the Jordan algebra with the
same %underlying
space and multiplication $x\cdot y = \frac{1}{2}(xy + yx)$.  An
involution of $\calA$ is a linear map $\sigma : \calA \rightarrow
\calA$ such that $\sigma(xy)=\sigma(y)\sigma(x)$ for all $x,y\in
\calA$.  Then $S(\sigma,\calA)=\{x\in \calA | \sigma(x)=x\}$ is a
Jordan subalgebra of $\calA^+$.  Let $\calJ$ be a central simple cubic
Jordan algebra. Then $\calJ$ is one of the following five types (see
Theorem 37.2(2) and Section 37.C in \cite{BoI}):
\begin{enumerate} [(a)]
\item $\calJ=S(\sigma,\calA)$ where $\calA$ is a central simple
  $k$-algebra of degree 3, and $\sigma$ is an orthogonal involution.
  Then $\dim(\calJ)=6$, and the Lie algebra $\Der(\calJ)$ is a form of
  $\frakso(3)$.

\item $\calJ=\calA^+$ where $\calA$ is a central simple $k$-algebra of
  degree 3.  Then $\dim(\calJ)=9$, and the Lie algebra $\Der(\calJ)$
  is an inner form of $\fraksl(3)$.

\item $\calJ=S(\sigma, \calA)$ where $\calA$ is a central simple
  $K$-algebra of degree 3, $K$ is a quadratic extension of $k$, and
  $\sigma$ is a unitary involution. Then $\dim(\calJ)=9$, and the Lie
  algebra $\Der(\calJ)$ is an inner form of $\fraksu(3)$.

\item $\calJ=S(\sigma,\calA)$ where $\calA$ is a central simple
  $k$-algebra of degree 6, and $\sigma$ is a symplectic involution.
  Then $\dim(\calJ)=15$, and the Lie algebra $\Der(\calJ)$ is a form
  of $\fraksp(6)$.

\item $\calJ$ is an exceptional Jordan algebra. Then $\dim(\calJ)=27$,
  and the Lie algebra $\Der(\calJ)$ is a form of $\rF_4$.
\end{enumerate}

A convenient way to construct (some) cubic Jordan algebras is by means
of composition algebras.  Let $\calC$ be a composition
$k$-algebra. Let $c\mapsto \bar c$, for $c\in \calC$, be the
conjugation in $\calC$.  Let
$\varrho=diag(\varrho_1,\varrho_2,\varrho_3)$ be a 3 by 3 diagonal
matrix with entries in $k$. We set
\[
\calH_3(\calC,\varrho) = \{ x \in \rM_3(\calC) :
\bar{x}^\top = \varrho x \varrho^{-1} \}.
\]
This forms a Jordan algebra under the multiplication defined by $x
\cdot y = \frac{1}{2}(xy + yx)$.  See Section 37.C in \cite{BoI}. If
$\varrho$ is the identity matrix, then we denote the above Jordan
algebra by $\calH_3(\calC)$.  The dimension of
$\calH_3(\calC,\varrho)$ is $3+3\dim(\calC)$.  If $k=\bbR$ then we say
that a composition algebra $\calC$ is definite if $c\mapsto c\bar c$
is a positive definite quadratic form. Otherwise, we say that $\calC$
is indefinite. Definite composition algebras are $\bbR,\bbC,\bbH$ and
Cayley octonions $\bbO$.  The following is an exhaustive list of all
real central simple cubic Jordan algebras (see pages 172-173 in
\cite{OV}):

\begin{enumerate}[(a)]
\item $\calJ^{c}= \calH_3(\calC,\varrho)$ where $\calC$ is definite,
  and $\varrho={\mathrm{diag}}(1,1,1)$.  The real Lie algebra
  $\Der(\calJ^c)$ is compact.

\item $\calJ^{\rmII}= \calH_3(\calC,\varrho)$ where $\calC$ is
  definite, and $\varrho={\mathrm{diag}}(1,-1,1)$.  The real Lie
  algebra $\Der(\calJ^{\rmII})$ has split rank one.

\item $\calJ^{s}= \calH_3(\calC,\varrho))$ where $\calC$ is
  indefinite, and $\varrho={\mathrm{diag}}(1,1,1)$.  The real Lie
  algebra $\Der(\calJ^s)$ is split.
\end{enumerate}

\section{Tits construction} \label{S28} 
\subsection{} Let $\bbO$ be an octonion $k$-algebra and $\calJ$ a
central simple cubic Jordan $k$-algebra.  Let $\Der(\bbO)$ and
$\Der(\calJ)$ be the Lie algebras of derivatives.  Let
$\frakh=\Der(\bbO)\oplus \Der(\calJ)$.  Let $H$ be the group of inner
automorphisms of $\frakh$.  Let $\bbO_0$ and $\calJ_0$ denote the
respective subspaces of trace zero elements. Then
$\frakh^{\perp}=\bbO_0\otimes \calJ_0$ is an irreducible $\frakh$
module. It is also an irreducible $H$-module.  In \cite{T1}, Tits
constructed an exceptional Lie algebra $\frakg$ over $k$ on the vector
space
\[ 
\frakg = \frakg(\bbO, \calJ) = \frakh  \oplus \frakh^{\perp} 
\]
We refer to \cite{T1} and \cite{Ja} for the definition of the Lie
brackets.  If $\dim \calJ = 6, 9, 15$ or $27$, then~$\frakg$ is a
simple Lie algebra of type $\rF_4, \rE_6, \rE_7$ or $\rE_8$
respectively. Henceforth, we shall call $(\frakh,\frakg)$ a Tits pair. 

Let $G$ be the group of inner automorphisms of $\frakg$.  As $H$ acts
on $\frakh^{\perp}$, it is immediate that $H$ is contained in $G$.
Let $G_{sc}$ and $H_{sc}$ be simply connected covers of $G$ and $H$,
respectively.  Let $C_G$ and $C_H$ be their centers.  We have the
following important commutative diagram with exact rows that will be
used throughout the paper.
\begin{equation} \label{BIG}
\xymatrix{ 1 \ar[r] & C_H \ar[r] \ar@{->}[d] & H_{sc} 
   \ar[r] \ar@{->}[d] & H
  \ar[r] \ar@{->}[d] & 1 \\ 1 \ar[r] & C_G \ar[r] & G_{sc}
  \ar[r] & G \ar[r] & 1}
\end{equation}
If $G$ is of type $\rE_6$ (resp. $\rE_7$, $\rE_8$), then $H_{sc}$ is a
subgroup of $G_{sc}$ and $C_H=C_G$ is a cyclic group of order 3
(resp. 2, 1).  If $G$ is of type $\rF_4$, then $C_G$ is trivial but
$C_H$ is a cyclic group of order~2.

\begin{lemma} \label{L29} 
Let $(\frakh,\frakg)$ be a Tits pair defined
  over $k$. Let $B$ be the normalizer of $\frakh$ in $A=\Aut(\frakg)$.

\begin{enumerate}[(i)]
\item The centralizer of $\frakh$ in $A(k)$ is trivial.

\item The conjugation action of $B$ on $\frakh$ gives an isomorphism
  of $B(k)$ and $\Aut(\frakh)(k)$.
\end{enumerate}
\end{lemma}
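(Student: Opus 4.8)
The plan is to prove (i) directly, deduce injectivity in (ii) from it, and obtain surjectivity in (ii) from the functoriality of the Tits construction together with a type-by-type analysis of the Jordan factor.

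\emph{Part (i).} Let $\phi\in A(k)$ centralize $\frakh$, so $\phi|_{\frakh}=\id$. For $x\in\frakh$ and $y\in\frakh^{\perp}$ the automorphism property gives $[x,\phi(y)]=\phi([x,y])$, so $\phi|_{\frakh^{\perp}}$ commutes with the $\frakh$-action. Since $\frakh^{\perp}=\bbO_0\otimes\calJ_0$ is absolutely irreducible, Schur's lemma yields $\phi|_{\frakh^{\perp}}=\lambda\cdot\id$ with $\lambda\in k^{\times}$. Writing $[y,y']=u+w$ ($u\in\frakh$, $w\in\frakh^{\perp}$) for $y,y'\in\frakh^{\perp}$ and comparing $\phi([y,y'])=u+\lambda w$ with $[\phi(y),\phi(y')]=\lambda^{2}(u+w)$ on the $\frakh^{\perp}$-part gives $\lambda=\lambda^{2}$, hence $\lambda=1$, as soon as this projection is nonzero for some $y,y'$. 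I would argue the nonvanishing conceptually: if $[\frakh^{\perp},\frakh^{\perp}]\subseteq\frakh$ then $\frakg=\frakh\oplus\frakh^{\perp}$ is a $\bbZ/2$-grading and $\frakh$ is the fixed subalgebra of the involution $\id_{\frakh}\oplus(-\id_{\frakh^{\perp}})$, i.e.\ a symmetric subalgebra; but $\frakh$ (of type $\rG_2\times\rA_1$, $\rG_2\times\rA_2$, $\rG_2\times\rC_3$, $\rG_2\times\rF_4$) does not appear among the symmetric subalgebras of $\frakg$ (of type $\rF_4,\rE_6,\rE_7,\rE_8$). Hence $\lambda=1$ and $\phi=\id$. (Concretely, the Tits bracket of \cite{T1,Ja} has a term $(a\ast b)\otimes(x\ast y)\in\frakh^{\perp}$ that is generically nonzero.)

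\emph{Part (ii).} Conjugation defines $\rho\colon B\to\Aut(\frakh)$, whose kernel on $B(k)$ is the centralizer of $\frakh$ in $A(k)$, trivial by (i); thus $\rho$ is injective on $k$-points. As $\Der(\bbO)$ is of type $\rG_2$ and $\Der(\calJ)$ of type $\rA_1,\rA_2,\rC_3$, or $\rF_4$, these nonisomorphic simple factors cannot be interchanged, so $\Aut(\frakh)=\Aut(\Der(\bbO))\times\Aut(\Der(\calJ))$. Every inner automorphism of $\frakh$ already lies in $H(k)\subseteq B(k)$, so only the outer automorphisms defined over $k$ remain to be realized. For these I would invoke the functoriality of the Tits construction: a pair $(\alpha,\beta)\in\Aut(\bbO)(k)\times\Aut(\calJ)(k)$ acts on $\frakg$ by $\alpha_{*}\oplus\beta_{*}$ on $\frakh$ and $\alpha|_{\bbO_0}\otimes\beta|_{\calJ_0}$ on $\frakh^{\perp}$, preserving the bracket and hence lying in $B$, with $\rho$-image $(\alpha_{*},\beta_{*})$. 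Surjectivity of $\rho$ on $k$-points therefore reduces to surjectivity of $\Aut(\bbO)(k)\to\Aut(\Der(\bbO))(k)$ and $\Aut(\calJ)(k)\to\Aut(\Der(\calJ))(k)$.

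The first is the identity isomorphism of the $\rG_2$-group (Section~\ref{S25}). For the second, outer automorphisms arise only when the diagram of $\Der(\calJ)$ has a symmetry, i.e.\ in the type-$\rA_2$ cases (b) and (c); for (a), (d), (e) the diagrams $\rA_1,\rC_3,\rF_4$ are asymmetric and nothing further is needed. In case (b), $\calJ=\calA^{+}$, the outer automorphism is $k$-rational exactly when $\calA\cong\calA^{\opp}$, and an anti-automorphism of $\calA$ is then a Jordan automorphism of $\calA^{+}$ realizing it (for $\calA=\rM_3(k)$ the transpose induces $X\mapsto-X^{\top}$ on $\fraksl_3$); when $\calA\not\cong\calA^{\opp}$ no outer $k$-automorphism exists. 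Case (c) is where I expect the main obstacle: with $\calA$ central simple over a quadratic extension $K/k$ carrying a unitary involution and $\Der(\calJ)$ an inner form of $\fraksu_3$, one must separate the $K/k$-Galois action from the diagram automorphism to decide when an outer $k$-automorphism of $\Der(\calJ)$ exists, and then produce a $k$-rational Jordan automorphism of the Hermitian algebra inducing it through the Tits functor. Once this is settled in every type, $\rho$ is bijective on $k$-points.
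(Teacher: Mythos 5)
Your part (i) is essentially the paper's argument: absolute irreducibility of $\frakh^{\perp}$ gives $\varphi|_{\frakh^{\perp}}=\lambda\,\id$, and the existence of $y,y'\in\frakh^{\perp}$ with $[y,y']\notin\frakh$ forces $\lambda^{2}=\lambda$, hence $\lambda=1$. The paper justifies the nonvanishing of the $\frakh^{\perp}$-component by pointing to the explicit bracket formula in Tits' construction; your alternative via the classification of symmetric subalgebras is valid (none of $\rG_2\times\rA_1$, $\rG_2\times\rA_2$, $\rG_2\times\rC_3$, $\rG_2\times\rF_4$ occurs as a symmetric subalgebra of $\rF_4$, $\rE_6$, $\rE_7$, $\rE_8$), though it trades a one-line computation for an appeal to the classification of symmetric pairs.

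Part (ii) contains a genuine gap, which you have flagged yourself. Injectivity follows from (i), and since $H(k)\subseteq B(k)$ the only issue is realizing the outer automorphisms of $\frakh$ defined over $k$; these can exist only when $\Der(\calJ)$ has type $\rA_2$, i.e.\ in the $\rE_6$ cases (b) and (c). Your case (c) --- $\calA$ central simple of degree $3$ over a quadratic field extension $K$ with a unitary involution --- is left open, and it is exactly where the paper does its real work. The missing idea is Brauer-theoretic rather than a computation with Hermitian matrices: the Lie algebra $\fraksu(\calA,\sigma)$ determines the pair of Tits algebras $\{[\calA],[\calA^{\opp}]\}$ in $\Br(K)$, and an outer automorphism defined over $k$ interchanges the two fundamental weights, hence forces $\calA\cong\calA^{\opp}$; since $[\calA]$ has order dividing $3$ and now also dividing $2$, it is trivial, so $\calA\cong M_3(K)$. (Taking $K$ \'etale, this uniformly covers your case (b) as well.) After this reduction the outer automorphism is simply entrywise conjugation $y\mapsto\bar y$ on $\Der(\calJ)\subseteq M_3(K)$ and on $\calJ_0$, and it extends to $\frakg$ by $x\otimes y\mapsto x\otimes\bar y$ --- an instance of the functoriality you invoke. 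Without the reduction to $M_3(K)$ your program cannot be completed: for non-split $\calA$ the surjectivity claim is vacuous because no outer $k$-automorphism exists, but your proposal has no mechanism for establishing that dichotomy, which is precisely what "deciding when an outer $k$-automorphism exists" requires.
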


\begin{proof}
  Let $\varphi\in A(k)$ such that $\varphi(x)=x$ for all $x\in\frakh$.
  Since $\frakh^{\perp}$ is an absolutely irreducible $\frakh$-module,
  there exists a non-zero constant $c$ such that $\varphi(x)=cx$ for
  all $x\in\frakh^{\perp}$. From the explicit formula for the Lie
  bracket in \cite{T1}, it follows that there exist $x,y\in
  \frakh^{\perp}$ such that $[x,y]$ is not contained in $\frakh$ i.e.
  $[x,y]=z+z^{\perp}\in \frakh\oplus \frakh^{\perp}$ and
  $z^{\perp}\neq0$. Applying $\varphi$ to this relation implies
  $c^2=c$. Hence $c=1$ and $\varphi$ is the trivial automorphism. This
  proves~(i).  

  Part (i) implies that the conjugation action of the normalizer $B$
  on $\frakh$ is faithful. Since $H(k)\subseteq B(k) \subseteq
  \Aut(\frakh)(k)$, we are done, except in the $\rE_6$-case, when
  $H(k) \neq \Aut(\frakh)(k)$ i.e. when $\frakh$ has an outer
  automorphism defined over $k$. Recall that a form of $\fraksl(3)$
  corresponds to a central simple algebra $ \calA$ of degree 3 with a
  unitary involution $\sigma$ over an \'{e}tale quadratic extension
  $K$ of $k$.  If the form of $\fraksl(3)$ has an outer automorphism
  defined over $k$, then~$\calA$ is isomorphic to its opposite. Hence
  $\calA$ is isomorphic to the matrix algebra $M_3(K)$. Then $\calJ=\{
  x\in M_3(K) ~|~ \sigma(x)=x\}$, and $\Der(\calJ)=\{ x\in M_3(K) ~|~
  \sigma(x)=-x\}$. For every $x\in M_3(K)$ let $\bar{x}$ denote the
  action of the non-trivial automorphism of $K$ over $k$ .  Then
  $\tau(x,y)=(x, \bar{y})$, for every $(x,y)\in \Der(\mathbb O) \oplus
  \Der(\calJ)$, is an outer automorphism of $\frakh$.  It extends to
  an outer automorphism of $\frakg=\frakh \oplus \frakh^{\perp}$ by
  $\tau(x\otimes y)=x\otimes \bar{y}$, for every $x\otimes y\in
  \mathbb O_0 \otimes \calJ_0$.  Part (ii) follows.
 \end{proof}

 \subsection{} \label{S22} Let $k$ be any field, and $\frakh\subseteq
 \frakg$ be two semi-simple Lie algebras defined over~$k$.  Let $A =
 \Aut(\frakg)$ and $B$ the normalizer of $\frakh$ in $A$. We have a
 natural map from $B$ to $\Aut(\frakh)$. Assume that this is a
 bijection, as it is in the case of a Tits pair $(\frakh,\frakg)$.
 Let $\frakh'\subseteq\frakg'$ be a rational $k$-form of $\frakh
 \subseteq \frakg$, i.e. there exists an isomorphism $\varphi:
 \frakg(\bar k) \rightarrow \frakg'(\bar k)$ such that $\varphi
 (\frakh(\bar k))=\frakh'(\bar k)$.
 
\begin{lemma} \label{L:unique} 
The class of $\frakh'$ determines the class of $\frakg'$. In particular, if $\frakh$
and $\frakh'$ are isomorphic over $k$, then $\frakg$ and $\frakg'$ are isomorphic
over $k$ too.
 \end{lemma}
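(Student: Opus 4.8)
The plan is to translate the set-theoretic statement into Galois cohomology using the bijection $B \cong \Aut(\frakh)$ and the inclusion $B \hookrightarrow A = \Aut(\frakg)$. First I would fix the reference pair $\frakh \subseteq \frakg$ and recall that, by the standard correspondence, a $k$-form $\frakh' \subseteq \frakg'$ of the inclusion $\frakh \subseteq \frakg$ is classified by a cocycle class in $H^1(k, B)$: the given isomorphism $\varphi \colon \frakg(\ok) \to \frakg'(\ok)$ with $\varphi(\frakh(\ok)) = \frakh'(\ok)$ produces, via $\sigma \mapsto \varphi^{-1} \circ {}^\sigma\varphi$, a $1$-cocycle valued in $\Aut(\frakg)(\ok)$ which in fact lands in the normalizer $B(\ok) = \{ g \in A(\ok) : \rAd_g(\frakh(\ok)) = \frakh(\ok) \}$, precisely because $\varphi$ carries $\frakh$ to $\frakh'$. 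So attached to the pair $(\frakh', \frakg')$ is a well-defined class $[\frakh'] \in H^1(k,B)$.

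Next I would observe that the inclusion of algebraic groups $\iota \colon B \hookrightarrow A$ induces a map on cohomology $\iota_* \colon H^1(k,B) \to H^1(k,A)$, and that forgetting the subalgebra $\frakh'$ and remembering only the ambient $\frakg'$ corresponds exactly to pushing the class forward: the image cocycle $\sigma \mapsto \varphi^{-1} \circ {}^\sigma\varphi$, now regarded in $A(\ok)$ rather than $B(\ok)$, is by definition the class $[\frakg'] \in H^1(k,A)$ classifying the $k$-form $\frakg'$ of $\frakg$. In other words $\iota_*([\frakh']) = [\frakg']$. This is just the functoriality of the cocycle construction under the inclusion $B \hookrightarrow A$, and it is where Lemma~\ref{L29}(ii) is used: the identification $B \cong \Aut(\frakh)$ guarantees that $H^1(k,B)$ genuinely classifies the forms $\frakh'$ (with their embedding data) rather than some auxiliary object, so that the class $[\frakh']$ is literally determined by the isomorphism class of the pair.

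The conclusion is then immediate: since $[\frakg'] = \iota_*([\frakh'])$ is a function of $[\frakh']$, two rational forms with the same class in $H^1(k,B)$ have the same class in $H^1(k,A)$, i.e. the class of $\frakh'$ determines the class of $\frakg'$. For the second assertion, if $\frakh' \cong \frakh$ over $k$ then $[\frakh']$ is the distinguished (trivial) base point of $H^1(k,B)$, so $[\frakg'] = \iota_*([\frakh'])$ is the base point of $H^1(k,A)$, which means $\frakg' \cong \frakg$ over $k$.

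I do not expect a serious obstacle here, as the statement is essentially the functoriality of nonabelian $H^1$ under $B \hookrightarrow A$ once the bijection $B \cong \Aut(\frakh)$ is in hand. The one point deserving care is the verification that the cocycle attached to $(\frakh', \frakg')$ really takes values in $B$ and not merely in $A$; this is exactly the condition $\varphi(\frakh(\ok)) = \frakh'(\ok)$, which forces $\varphi^{-1} \circ {}^\sigma\varphi$ to normalize $\frakh(\ok)$, and so the whole argument hinges on that compatibility being recorded correctly at the cocycle level.
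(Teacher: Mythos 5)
Your proposal is correct and follows essentially the same route as the paper: both observe that the cocycle $\sigma\mapsto\varphi^{-1}\circ\varphi^{\sigma}$ lands in $B(\ok)$ because $\varphi$ carries $\frakh$ to $\frakh'$, identify the resulting class in $H^1(k,B)$ with $[\frakh']$ via the isomorphism $B\cong\Aut(\frakh)$ from Lemma~\ref{L29}(ii), and recover $[\frakg']$ as its image in $H^1(k,A)$. No gaps.
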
 

\begin{proof}
 The class $[\frakg']\in H^1(k,A)$ is given by the cocycle
 \[ 
 {\sigma}\mapsto a_{\sigma}= \varphi^{-1} \circ \varphi^{\sigma} \in
 A(\bar{k})
 \] 
 where $\sigma\in\Gamma_k$.  Since $\varphi^{-1} \circ
 \varphi^{\sigma}(\frakh) = \frakh$, it follows that $a_{\sigma}\in
 B(\bar k)$.  Hence $[\frakg']\in H^{1}(k,B)$.  Under the isomorphism
 of $B$ and $\Aut(\frakh)$, the class $[\frakg']$ corresponds to the
 cohomology class $[\frakh']$. Hence the lemma.
\end{proof}

\begin{prop} Let $(\frakh,\frakg)$ be a Tits pair. The map 
\[ 
\gamma: H^1(k,B)\longrightarrow H^1(k,A) 
\] 
is the Tits construction. 
\end{prop}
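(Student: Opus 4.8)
The plan is to make both sides of $\gamma$ explicit and then compare cocycles. By Lemma~\ref{L29}(ii) conjugation identifies $B$ with $\Aut(\frakh)$, so $H^1(k,B)=H^1(k,\Aut(\frakh))$ classifies $k$-forms of $\frakh=\Der(\bbO)\oplus\Der(\calJ)$. The simple ideals $\Der(\bbO)$ and $\Der(\calJ)$ have distinct Dynkin types, so no automorphism interchanges them and $\Aut(\frakh)=\Aut(\Der(\bbO))\times\Aut(\Der(\calJ))$. The differentiation maps $\Aut(\bbO)\to\Aut(\Der(\bbO))$ and $\Aut(\calJ)\to\Aut(\Der(\calJ))$ are isomorphisms of algebraic groups: for the first this is the identity $\Aut(\bbO)=\Aut(\frakg_2)$ of \S\ref{S25}; for the second one checks it type by type, the outer automorphism in the $\rE_6$ case being realized by the conjugation automorphism of $\calJ$ produced in the proof of Lemma~\ref{L29}. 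Passing to cohomology, $H^1(k,B)$ becomes the set of pairs $(\bbO',\calJ')$, an octonion algebra together with a cubic Jordan algebra of the same type as $\calJ$, which is exactly the source of the Tits construction.

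I would then produce a comparison homomorphism from the naturality of the construction. The Tits bracket on $\frakg(\bbO,\calJ)=\frakh\oplus(\bbO_0\otimes\calJ_0)$ is given by universal formulas in the multiplications of $\bbO$ and $\calJ$ (\cite{T1},\cite{Ja}), so every pair $(\phi,\psi)\in\Aut(\bbO)\times\Aut(\calJ)$ determines an automorphism $\rho(\phi,\psi)$ of $\frakg$ that conjugates the two ideals of $\frakh$ by $\phi$ and $\psi$ and acts on $\frakh^{\perp}$ by $\phi|_{\bbO_0}\otimes\psi|_{\calJ_0}$. Since $\rho(\phi,\psi)$ stabilizes $\frakh$ it lands in $B$, and by construction the composite $\Aut(\bbO)\times\Aut(\calJ)\xrightarrow{\rho} B\cong\Aut(\frakh)$ is the differentiation isomorphism of the previous paragraph. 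Now take a form $\frakh'$ with cocycle $\sigma\mapsto b_\sigma\in\Aut(\frakh)(\ok)=B(\ok)$ and lift it through $\rho$ to $(\phi_\sigma,\psi_\sigma)$, the cocycles that cut out $\bbO'$ and $\calJ'$. By Lemma~\ref{L:unique}, $\gamma([\frakh'])=[\frakg']$, where $\frakg'$ is $\frakg$ with the twisted Galois action $x\mapsto b_\sigma(\sigma x)$. Because $b_\sigma=\rho(\phi_\sigma,\psi_\sigma)$ respects the decomposition and acts through $\phi_\sigma$ and $\psi_\sigma$ on the composition data separately, the twisted algebra is $\Der(\bbO')\oplus\Der(\calJ')\oplus(\bbO'_0\otimes\calJ'_0)$ with bracket given by the same universal formulas evaluated on $\bbO'$ and $\calJ'$; that is, $\gamma([\frakh'])=[\frakg(\bbO',\calJ')]$.

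The step I expect to require genuine care is the claim that $\rho(\phi,\psi)$ acts on $\frakh^{\perp}$ exactly as $\phi|_{\bbO_0}\otimes\psi|_{\calJ_0}$, equivalently that this tensor product preserves the Tits bracket. One can verify this directly from the formulas of \cite{T1} and \cite{Ja}. A cleaner argument bypasses the formulas using Lemma~\ref{L29}(i), whose proof applies verbatim over $\ok$: since the centralizer of $\frakh$ in $A(\ok)$ is trivial, an automorphism of $\frakg\otimes\ok$ restricting to a prescribed automorphism of $\frakh\otimes\ok$ is unique, so the $\frakh$-equivariant extension to the absolutely irreducible module $\frakh^{\perp}$ carries no scalar ambiguity and is forced to be the tensor action. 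This uniqueness simultaneously shows that $\rho$ is well defined, and since the differentiation map $\Aut(\bbO)\times\Aut(\calJ)\to\Aut(\frakh)$ is an isomorphism every class in $H^1(k,B)$ arises from a pair $(\bbO',\calJ')$; the computation of the previous paragraph then identifies $\gamma$ with the Tits construction.
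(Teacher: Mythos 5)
Your argument is correct, but it takes a much longer road than the paper's proof, which disposes of the proposition in three sentences: by Lemma~\ref{L:unique} there is at most one form of $\frakg$ containing a given form $\frakh'$ of $\frakh$, the Tits construction manifestly supplies one, and $\gamma([\frakh'])$ is by construction the class of a form of $\frakg$ containing $\frakh'$, so the two must coincide. What you do instead is make both ends of $\gamma$ explicit: you identify $H^1(k,B)$ with pairs $(\bbO',\calJ')$ via $\Aut(\bbO)\times\Aut(\calJ)\cong\Aut(\frakh)\cong B$ and then track the cocycle through the twist of $\frakg=\frakh\oplus\frakh^{\perp}$. This buys a completely concrete description of $\gamma$ (one sees directly that the twisted algebra is $\frakg(\bbO',\calJ')$ with its Tits bracket), at the cost of the case-by-case check that $\Aut(\calJ)\to\Aut(\Der(\calJ))$ is an isomorphism and of the equivariance of the bracket under $\phi\otimes\psi$. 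The paper's route absorbs all of this into the single observation that the Tits construction produces \emph{some} rational form of the pair $(\frakh,\frakg)$ over $\frakh'$, and lets Lemma~\ref{L:unique} do the rest; note, though, that the paper also tacitly uses that every form of $\frakh$ arises from a pair $(\bbO',\calJ')$, which is exactly the point you spell out.

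One caveat: your proposed ``cleaner argument'' for the key equivariance step is circular. Lemma~\ref{L29}(i) gives uniqueness of an automorphism of $\frakg\otimes\ok$ extending a prescribed automorphism of $\frakh\otimes\ok$, and Schur's lemma shows that this extension acts on $\frakh^{\perp}$ by $c\,(\phi|_{\bbO_0}\otimes\psi|_{\calJ_0})$ for some scalar $c$; but to conclude $c=1$ by the $[x,y]=z+z^{\perp}$ trick one must already know that the tensor action with $c=1$ preserves the Tits bracket, which is precisely the statement being bypassed. So the verification from the explicit formulas of \cite{T1}, \cite{Ja} --- that is, the functoriality of the construction in the pair $(\bbO,\calJ)$ --- cannot be avoided by this device; fortunately it is routine, and you correctly flag it as the step requiring genuine care.
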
 
\begin{proof} Indeed, if $\frakh'$ is a form of $\frakh$, then $\gamma([\frakh'])=[\frakg']$ where 
$\frakg'$ is a form of $\frakg$ containing $\frakh'$. We know, from Lemma \ref{L:unique}, that there is at most one form of 
$\frakg$ containing $\frakh'$. But one such form is obtained by the Tits construction. Hence $\frakg'$ is obtained 
from $\frakh'$ by the Tits construction. 
\end{proof} 

\begin{prop}\label{P35} 
 Let $(\frakh,\frakg)$ be a Tits pair. If $\frakh$ is split (respectively quasi-split) then $\frakg$ is split (respectively quasi-split). 
\end{prop}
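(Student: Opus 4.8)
The plan is to reduce, via Lemma \ref{L:unique}, to the construction of one convenient Tits pair in each type. Lemma \ref{L:unique} says that up to $k$-isomorphism $\frakg$ is the unique form of its type containing $\frakh$, so it suffices to exhibit a \emph{split} (resp.\ \emph{quasi-split}) form $\frakg_0$ containing a split (resp.\ quasi-split) Tits partner $\frakh_0$ isomorphic to $\frakh$. If $\frakh$ is split then it is the unique split form of its type (in the $\rE_6$ case a split partner forces the factor $\Der(\calJ)=\fraksl(3)$ split, never the outer form $\fraksu(3)$), hence $\frakh\cong\frakh_0$ over $k$ and Lemma \ref{L:unique} yields $\frakg\cong\frakg_0$ split. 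The quasi-split case will be reduced to the split case below.

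For the split case the content is to show that the Tits algebra built from the split octonion algebra $\bbO_s$ and the split (reduced) Jordan algebra $\calJ_s$ is split. Let $\frakt\subseteq\frakh_0=\Der(\bbO_s)\oplus\Der(\calJ_s)$ be a split Cartan subalgebra. First I would check by a weight count that the centralizer $Z_{\frakg}(\frakt)$ is a Cartan subalgebra of $\frakg$: since $\frakt$ splits as the sum of Cartans of the two factors, a weight of $\frakt$ on $\frakh^{\perp}=(\bbO_s)_0\otimes(\calJ_s)_0$ is a pair of weights lying in the disjoint root lattices of $\rG_2$ and of $\Der(\calJ)$, so it vanishes only when both components vanish; thus the zero-weight space is the tensor product of the two zero-weight spaces, which in each of the four types contributes exactly $\rank\frakg-\rank\frakh$ extra dimensions, giving $\dim Z_{\frakg}(\frakt)=\rank\frakg$. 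It then remains to prove that $Z_{\frakg}(\frakt)=\frakt\oplus Z_{\frakh^{\perp}}(\frakt)$ is \emph{split}, i.e.\ that the extra zero-weight vectors in $\frakh^{\perp}$ are $k$-semisimple with eigenvalues in $k$. This is where the explicit Tits bracket of \cite{T1,Ja} enters: the rational idempotent frame $e_1,e_2,e_3$ of the reduced algebra $\calJ_s$, together with the hyperbolic structure of $\bbO_s$, should produce these toral directions over $k$, so that $Z_{\frakg}(\frakt)$ is a split maximal torus and $\frakg_0$ is split.

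For the quasi-split case, $\rF_4,\rE_7$ and $\rE_8$ have no outer automorphisms, so there quasi-split coincides with split and the statement is already covered. Only $\rE_6$ is genuinely new. Here a quasi-split form is, by the standard theory of pinnings, obtained from the split form by twisting through the canonical section $\mathrm{Out}\to\Aut$ with a class in $H^1(k,\bbZ/2\bbZ)$ attached to a quadratic \'etale algebra $K$; on the $\frakh$-side this is precisely the passage from split $\fraksl(3)$ to quasi-split $\fraksu(3)$. I would show that the twisting is compatible with $\gamma$: the outer automorphism $\tau$ of $\frakh$ flipping the $\rA_2$-factor, written down in the proof of Lemma \ref{L29}, is the restriction of a pinned outer automorphism of the split $\frakg_0$ extending it to $\frakh_0\oplus\frakh_0^{\perp}$. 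Consequently, twisting the split Tits pair $\frakh_0\subseteq\frakg_0$ by one and the same cocycle in $H^1(k,\bbZ/2\bbZ)$ produces simultaneously the quasi-split $\frakh$ and a quasi-split form of $\frakg$ containing it; by Lemma \ref{L:unique} this is our $\frakg$, which is therefore quasi-split.

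The main obstacle is the split case: verifying with the Tits multiplication that the zero-weight vectors coming from the reduced Jordan frame are genuinely split toral elements, equivalently that the $k$-rank of $\frakg(\bbO_s,\calJ_s)$ attains the absolute rank. The compatibility needed in the $\rE_6$ case is comparatively formal, following from the explicit extension of $\tau$ already recorded in Lemma \ref{L29}.
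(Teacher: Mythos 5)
Your overall architecture --- reduce via Lemma \ref{L:unique} to exhibiting one (quasi-)split pair in each type, and treat the only genuinely new quasi-split case, $\rE_6$, by twisting a split pair through a simultaneous outer automorphism --- is the same as the paper's. The problem is the split case, which you yourself flag as ``the main obstacle'': your argument reduces the proposition to the claim that the zero-weight vectors of $\frakh^{\perp}$ coming from the idempotent frame of $\calJ_s$ and the hyperbolic idempotents of $\bbO_s$ are semisimple with eigenvalues in $k$, and then asserts that the Tits bracket ``should'' deliver this. That assertion is essentially the content of the proposition in the split case: the weight count only shows that $Z_{\frakg}(\frakt)$ is a Cartan subalgebra of the right dimension, and without splitness of the extra toral directions you only get that the $k$-rank of $\frakg$ is at least $\rank \frakh$, which for $\rE_8$ is $6<8$ and proves nothing. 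As written, the decisive computation with the explicit Tits multiplication is deferred, not done.

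The paper avoids this computation by running Lemma \ref{L:unique} in the opposite direction: instead of feeding split ingredients into the Tits construction and checking that the output is split, it starts from the split Lie algebra $\frakg'$, which exists in every type, and produces inside it a split dual pair $\frakh'$ of the correct isomorphism type by a known construction (citing \cite{Sa}; for quasi-split $\rE_6$ it uses the $\rA_2\times\rA_2\times\rA_2$ subalgebra and the decomposition $\frakg'=\fraksl_3(k)^{\oplus 3}\oplus (V\otimes V\otimes V)\oplus (V^*\otimes V^*\otimes V^*)$). Lemma \ref{L:unique} then forces $\frakg\cong\frakg'$. If you keep your direction you must actually carry out the bracket computation; otherwise, replace the split-case analysis by a construction (or citation) of a split dual pair inside the split algebra. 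A secondary, smaller point: in the quasi-split $\rE_6$ step you need the outer automorphism of the split $\frakg_0$ to be pinning-preserving \emph{and} to restrict to the $\fraksl_3$-factor of $\frakh_0$ as an involution whose twist is the quasi-split $\fraksu_3(K)$ --- an arbitrary outer involution of $\fraksl_3(k)$ can twist to an anisotropic unitary algebra. The $\tau$ of Lemma \ref{L29} is defined on the already-unitary form, so it does not by itself certify this; the paper's explicit embedding $x\mapsto(0,x,\rho(x))$ is what makes that compatibility checkable.
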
 
\begin{proof} By Lemma \ref{L:unique}, it suffices to construct, by
  any means, a rational $k$-form $\frakh' \subseteq \frakg'$ of
  $\frakh \subseteq \frakg$ where both $\frakh'$ and $\frakg'$ are
  (quasi) split.  Let $\frakg'$ be a split Lie algebra over $k$. It is
  well known that there is a split dual pair $\frakh'$ in $\frakg'$ see,
  for example, \cite{Sa} for a construction. We shall extend it to the
  quasi-split case. We may assume that $\frakg'$ has type $\rE_6$.  Lift
  the nontrivial automorphism of the Dynkin diagram to an outer
  automorphism $\tau$ of $\frakg'$.  Extend the Dynkin diagram by
  adding the lowest root, and remove the branch point. This is a
  diagram of type $\rA_2\times\rA_2\times\rA_2$. Hence $\frakg'$
  contains a subalgebra isomorphic to $\fraksl_3(k) \oplus
  \fraksl_3(k) \oplus \fraksl_3(k)$. One can choose an identification
  such that
\begin{enumerate}[(a)]
\item $\tau (x,y,z)=(x,z,y)$ for all $(x,y,z)\in \fraksl_3(k) \oplus
  \fraksl_3(k) \oplus \fraksl_3(k)$ and, 

\item under the adjoint action of
  $\fraksl_3(k) \oplus \fraksl_3(k) \oplus \fraksl_3(k)$, the Lie
  algebra $\frakg'$ decomposes as
\[ 
\frakg' = \fraksl_3(k) \oplus \fraksl_3(k) \oplus \fraksl_3(k) \oplus
(V\otimes V\otimes V )\oplus (V^*\otimes V^* \otimes V^*)
\] 
where $V$ is the standard representation of $\fraksl_3(k)$. 
\end{enumerate}
Lift the nontrivial automorphism of the Dynkin diagram of type $\rA_2$
to an outer automorphism $\rho$ of $\fraksl_3(k)$. Let $\varphi :
\fraksl_3(k) \rightarrow \frakg'$ be a homomorphism defined by
\[
\varphi(x)=(0,x,\rho(x)) \in \fraksl_3(k) \oplus \fraksl_3(k) \oplus
\fraksl_3(k)\subseteq \frakg'
\] 
for every $x\in \fraksl_3(k)$.  The centralizer of $\varphi(\fraksl_3(k))$ in $\frakg'$ is
\[ 
 \frakg_2\cong \fraksl_3(k) \oplus V \oplus V^*
\] 
a split Lie algebra of type $\rG_2$. Thus $\frakg'$ contains a split
dual pair $\frakh' \cong \frakg_2\oplus \fraksl_3(k)$. Moreover, the
automorphism $\tau$ of $\frakg'$ restricts to an automorphism of
$\frakh'$, trivial on $\frakg_2$, and $\rho$ on $\fraksl_3(k)$.  In
particular, if $K$ is a quadratic extension of $k$, we can twist
simultaneously $\frakh'$ and $\frakg'$ to construct a quasi split form
of $\frakh'$ contained in a quasi split form of $\frakg'$.
\end{proof}

\section{Tits algebras}

In this section we will review representations of algebraic groups over $k$ from \cite{T2}.

\subsection{}
Let $G_{sc}$ be a semi-simple, simply connected  algebraic group defined over $k$.  
Let $T$ be a maximal torus of $G_{sc}$ defined over $k$. Let $\Lambda$
denote the lattice of weights of $T(\bar k)$. Let $\Lambda_r$ denote the
root sublattice.  We choose a positive root system $\Phi^+$ in
$\Lambda$.  Let $\Lambda_+$ denote the subset of dominant weights. We
set $C^* = \Lambda / \Lambda_r$ which is the abelian group dual to the
center $C(\ok)$ of~$G_{sc}(\ok)$.

The Galois group $\Gamma_k$ acts on the Dynkin diagram as follows. 
If $\sigma \in \Gamma_k$ then the conjugate of a weight $\lambda$ is 
defined by ${}^{\sigma}\lambda=\sigma\circ \lambda\circ\sigma^{-1} $.  For every $\sigma\in \Gamma_k$ 
there exists a unique element $w$ in the Weyl group such that $w({}^\sigma
\Phi^+) = \Phi^+$.  In particular, $\lambda\mapsto w({}^{\sigma}\lambda)$ defines a 
permutation of simple roots. For example, let $K$ be a quadratic extension of $k$ and consider a hermitian form 
\[
h(z_1,\ldots,z_n)=a_1|z_1|^2 + \cdots + a_n|z_n|^2 
\] 
on $K^n$ where $a_1, \ldots, a_n\in k$. Let $G_{sc}$ be the group of special unitary transformations, and $T$ the torus 
consisting of diagonal matrices. Then $T(k)\cong (K^1)^n $ where $K^1$ is the group of norm one elements in $K^{\times}$. 
Let $\sigma\in \Gamma_k\setminus\Gamma_K$. Then $\sigma(z)=z^{-1}$ for every $z\in K^1$. Hence 
 ${}^{\sigma}\lambda=-\lambda$ for all weights. Since $-1$ is not 
in the root system of type $\rA_{n-1}$, $\sigma$ acts non-trivially on the Dynkin diagram.

\subsection{}  Let $K$ be the extension of $k$ such that the Galois group 
 $\Gamma_K$ is the stabilizer of all vertices of  the Dynkin diagram.  
For every  $\lambda \in \Lambda_+$  there exists a unique irreducible $\bar k$-representation 
$V_\lambda$ of $G_{sc}(\bar k)$ with the highest weight $\lambda$.    
Tits shows that $V_{\lambda}$ can be defined over $K$ in the following sense. 
There exists a central simple algebra $\calA_{\lambda}$ over $K$  of dimension $(\dim V_{\lambda})^2$ and 
a homomorphism 
\[ \rho_\lambda : G_{sc}(K)  \rightarrow \GL_1(\calA_{\lambda}) \] 
such that $V_{\lambda}$ is obtained from $\rho_{\lambda}$ by extension of scalars.  
Define $\beta_{G}(\lambda) \in \Br(K)$ to be the class of $\calA_{\lambda}$ in the Brauer group of $K$.  In
\cite{T2}, Tits shows that $\beta_G$ induces a group
homomorphism
\[
\beta_G : C^* \rightarrow \Br(K). 
\]
For example, if $G_{sc}$ is a form of $\SL_n$, corresponding to a degree $n$ central simple algebra $\calA$ over a quadratic extension $K$, with a unitary involution, then 
$\beta_G(\lambda)$  is the class of $\calA$, where $\lambda$ is the highest weight of an $n$-dimensional representation of $G_{sc}(\bar k)$. 

\subsection{} We now give a cohomological interpretation of the map
$\beta_G$.  Let $G=G_{sc}/C$ be the adjoint quotient of $G_{sc}$. We
have an exact sequence of algebraic groups
\[
1 \rightarrow C \rightarrow G_{sc} \rightarrow G \rightarrow 1.
\]
which gives a coboundary map $\delta : H^1(k, G) \rightarrow H^2(k,
C)$.  Let $A = \Aut(\frakg)$. The inclusion of~$G $ in $ A$ gives rise
to a map of pointed sets $j : H^1(k,G) \rightarrow H^1(k,A)$. As in
Section \ref{S2.1}, let $c_G \in H^1(k,G)$ be the unique point such
that $j(c_G)$ is a class of a quasi split Lie algebra.  Let
$t_G=-\delta(c_G)\in H^2(k,C)$.  Any weight $\lambda$ gives a
homomorphism $C\rightarrow \bar k^{\times}$. Hence we have a sequence
of maps
\[ 
H^2(k,C)\longrightarrow H^2(K, C)\longrightarrow H^2(K, \bar k^{\times})=\Br(K)
\]
where the first map is the restriction.  Let $h_{\lambda}:
H^2(k,C)\longrightarrow \Br(K)$ be the composite.  By Proposition 31.6
in \cite{BoI}
\begin{equation} \label{DF} 
\beta_G(\lambda)=h_{\lambda}(t_G).
\end{equation}

\subsection{} Let $(\frakh,\frakg)$ be a Tits pair. Assume that the type of $\frakg$ is not $\rF_4$.   In this case $C_H=C_G=C$, 
hence the commutative 
diagram (\ref{BIG})  gives a commutative diagram 
\[
\xymatrix{ H^1(k, H) \ar[r]^{\delta_H} \ar[d]_{\alpha} & 
H^2(k, C) \ar@{=}[d] \\ 
H^1(k, G) \ar[r]^{\delta_G} & H^2(k, C)}
\]
Let $\frakh^{qs}$ be the unique quasi split inner form of $\frakh$.
This gives a unique class $c_H\in H^1(k,H)$.  By Proposition
\ref{P35}, $\gamma(\frakh^{qs})$ is a class of a quasi split
$\frakg^{qs}$.  Hence $\alpha(c_H)=c_G$, and the formula~\eqref{DF}
implies the following proposition.

\begin{prop}\label{P:tits} Let $(\frakh,\frakg)$ be a Tits pair. Assume that the type of $\frakg$ is not $\rF_4$.  Then 
\[ 
\beta_H=\beta_G. \qed
\] 
\end{prop}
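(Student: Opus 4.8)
The plan is to read off both sides from the Tits formula \eqref{DF} and reduce the asserted identity $\beta_H=\beta_G$ to a single equality of classes in $H^2(k,C)$. Applying \eqref{DF} to $G$ gives $\beta_G(\lambda)=h_\lambda(t_G)$ with $t_G=-\delta_G(c_G)$, and applying the same formula to the (semisimple, adjoint) group $H$ gives $\beta_H(\lambda)=h_\lambda(t_H)$ with $t_H=-\delta_H(c_H)$. Because $C_H=C_G=C$, the two groups have the same center, hence the same dual group $C^*=\Lambda/\Lambda_r$ and, crucially, the same auxiliary map $h_\lambda\colon H^2(k,C)\to\Br(K)$: this map is assembled only from the character $C\to\bar{k}^{\times}$ attached to $\lambda$, the restriction $H^2(k,C)\to H^2(K,C)$, and the cup product into $H^2(K,\bar{k}^{\times})=\Br(K)$, none of which refers to $H$ or $G$ beyond their common center. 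So everything comes down to showing $t_H=t_G$.

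For that equality I would invoke the commutative square displayed just before the statement. Commutativity, with the right-hand vertical arrow being the identity, reads $\delta_G\circ\alpha=\delta_H$, so $\delta_H(c_H)=\delta_G(\alpha(c_H))$. By Proposition \ref{P35} the Tits construction sends the quasi-split form $\frakh^{qs}$ to the quasi-split form $\frakg^{qs}$, which at the level of cohomology is exactly the relation $\alpha(c_H)=c_G$ recorded in the discussion preceding the proposition. Substituting, $\delta_H(c_H)=\delta_G(c_G)$, and hence $t_H=-\delta_H(c_H)=-\delta_G(c_G)=t_G$. Combining with the first paragraph, $\beta_H(\lambda)=h_\lambda(t_H)=h_\lambda(t_G)=\beta_G(\lambda)$ for every dominant $\lambda$, giving $\beta_H=\beta_G$.

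Given how much is already in place, the proof is essentially a formal assembly of \eqref{DF}, the commutative diagram, and $\alpha(c_H)=c_G$; the two points I would check with care are bookkeeping rather than analysis, and I expect them to be the only real obstacle. First, the two maps have a priori different codomains, so I must confirm that the field $K$ cutting out the kernel of the Galois action on the Dynkin diagram is genuinely the same for $\frakh$ and for $\frakg$. In the $\rE_6$ case this is the observation that the outer form of $E_6$ arises precisely through the quadratic extension $K$ governing the unitary factor $\Der(\calJ)$ of type $\rA_2$, so the $K$ attached to $H$ (seen on its $\rA_2$ factor, $\rG_2$ carrying no diagram automorphism) coincides with that of $G$; for $\rE_7,\rE_8$ both diagrams have trivial automorphism group, so $K=k$ on both sides. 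Second, I must be sure that $h_\lambda$ depends on $\lambda$ only through its image in $C^*$, so that the symbol $h_\lambda$ occurring in the two formulas literally denotes one and the same map even though $\lambda$ is a dominant weight of two different groups; this is immediate since the character $C\to\bar{k}^{\times}$ induced by $\lambda$ is trivial on the root lattice and hence factors through $\Lambda/\Lambda_r=C^*$. Once these identifications are fixed, the equality $t_H=t_G$ forces $\beta_H=\beta_G$ as maps $C^*\to\Br(K)$.
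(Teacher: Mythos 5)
Your argument is exactly the paper's: the proposition is proved in the paragraph preceding its statement by combining the commutative square $\delta_G\circ\alpha=\delta_H$ (valid since $C_H=C_G=C$), the identity $\alpha(c_H)=c_G$ from Proposition \ref{P35}, and formula \eqref{DF}. Your two bookkeeping checks (that $K$ agrees for $\frakh$ and $\frakg$, and that $h_\lambda$ factors through $C^*$) are sensible additions left implicit in the paper, though note the second map in the definition of $h_\lambda$ is the map on $H^2$ induced by the character $C\to\bar k^\times$, not a cup product.
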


\section{Forms of type $\rE_6$}

\subsection{} Assume that $G_{sc}$ has type $\rE_6$.  Let $K$ be the smallest extensions of  $k$ such that the Galois group  $\Gamma_K$ acts trivially on the Dynkin diagram of $G_{sc}$.  The  center $C$ of  $G_{sc}$ is $\Gamma_K$-isomorphic to $\mu_3$. Hence the image of $\beta_G$ is contained 
in the 3-torsion of the Brauer group: 
\[ 
\beta_G: C^* \rightarrow \Br_3(K).  
\] 
 Let $\varpi$ and $\varpi'$ be the fundamental weights of the two 27 dimensional representations of~$G_{sc}(\bar k)$.  Since $\varpi + \varpi'$ is in the root lattice,  
 $\beta_G(\varpi)$ and $\beta_G(\varpi')$ are classes of two opposite division algebras. 
  If $K$ is a quadratic extension of $k$, then $\sigma\in \Gamma_k\setminus\Gamma_K$ 
  switches $\varpi$ and $\varpi'$.  This implies that $\beta_G(\varpi)$, with $K$-structure twisted by $\sigma$, is isomorphic to 
  its opposite $\beta_G(\varpi')$ i.e. $\beta_G(\varpi)$ admits an involution of the second kind with respect to $K$, see also \cite{T2}.

\begin{prop}  \label{P:outer} Assume that $(\frakh,\frakg)$ is a Tits pair over $k$ such that $\frakg$ has type $\rE_6$. 
\begin{enumerate}[(i)]
\item $\frakg$ has an outer automorphism defined over $k$ if and only if $\beta_G=1$.
\item $\frakg$ has an outer automorphism defined over $k$  if and only if $\frakh$ has. 
\item If $k=\bbR$ then both $\frakg$ and $\frakh$ have an outer automorphism defined over $\bbR$. 
\end{enumerate} 
\end{prop}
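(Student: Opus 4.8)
The plan is to handle (i) first and then obtain (ii) and (iii) as formal consequences, reducing everything to the Tits algebra $\beta_G=\beta_H$; the equality $\beta_G=\beta_H$ is Proposition \ref{P:tits}, which applies here since $\frakg$ is of type $\rE_6$, not $\rF_4$. As a preliminary I record the analogue of (i) for $\frakh$. Since $\frakh=\Der(\bbO)\oplus\Der(\calJ)$ is a sum of two non-isomorphic simple algebras, and $\Der(\bbO)$, of type $\rG_2$, has no outer automorphisms, a rational outer automorphism of $\frakh$ is the same thing as one of the factor $\Der(\calJ)$, a form of $\fraksl(3)$ attached to a degree-$3$ central simple algebra $\calA$ over the quadratic \'etale algebra $K$ with a unitary involution. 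The computation in the proof of Lemma \ref{L29} shows that such an automorphism exists over $k$ if and only if $\calA\cong\calA^{\opp}$, equivalently $\calA$ is split; and by the description of $\beta_H$ recalled in Section~4 (the value on the standard weight is $[\calA]$), $\calA$ is split if and only if $\beta_H=1$. Hence $\frakh$ has a rational outer automorphism if and only if $\beta_H=1$.

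Now for (i), the direction $\beta_G=1\Rightarrow\frakg$ has an outer automorphism is constructive: if $\beta_G=1$ then $\beta_H=1$ by Proposition \ref{P:tits}, so $\calA$ is split, and the explicit automorphism $\tau$ of $\frakh$ built in the proof of Lemma \ref{L29} is outer and extends, over $k$, to an outer automorphism of $\frakg=\frakh\oplus\frakh^{\perp}$. The converse is the crux. Suppose $\theta\in A(k)$ is a rational outer automorphism. Conjugation by $\theta$ preserves the identity component $G=G_{sc}/C$, lifts to $G_{sc}$, and acts on the center $C\cong\mu_3$ by inversion, since every outer automorphism of type $\rE_6$ negates $C^*=\bbZ/3$ (it swaps $\varpi$ and $\varpi'$, and $\varpi+\varpi'\in\Lambda_r$) and hence inverts $C$. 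Moreover conjugation by the rational point $\theta$ fixes the distinguished class $c_G\in H^1(k,G)$: it acts trivially on $H^1(k,A)$, because inner automorphisms act trivially on $H^1$, so it preserves the unique class mapping to $[\frakg^{qs}]$ under $j$. By functoriality of the connecting map $\delta_G$ with respect to $\theta$, whose action on coefficients is inversion on $C$, we get $(-1)\cdot\delta_G(c_G)=\delta_G(c_G)$, that is $2\,\delta_G(c_G)=0$; since $H^2(k,C)$ is $3$-torsion this forces $\delta_G(c_G)=0$, i.e. $t_G=0$, and then $\beta_G(\lambda)=h_\lambda(t_G)=1$ for every $\lambda$ by \eqref{DF}. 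Thus $\beta_G=1$, proving (i).

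Part (ii) is then immediate: $\frakg$ has a rational outer automorphism iff $\beta_G=1$ by (i), iff $\beta_H=1$ by Proposition \ref{P:tits}, iff $\frakh$ has a rational outer automorphism by the first paragraph. For (iii), over $k=\bbR$ the quadratic \'etale algebra $K$ is either $\bbC$ or $\bbR\times\bbR$, and in either case $\Br(K)$ has no nontrivial $3$-torsion, since $\Br(\bbC)=0$ and $\Br(\bbR)=\bbZ/2\bbZ$. Hence the degree-$3$ class $[\calA]$ vanishes, $\calA$ is split, and $\beta_H=\beta_G=1$; part (i) and the first paragraph then produce outer automorphisms of $\frakg$ and of $\frakh$ over $\bbR$.

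I expect the one genuinely delicate point to be the converse in (i), namely that the mere existence of a rational outer automorphism forces $\beta_G$ to vanish. The mechanism driving it — functoriality of the Tits class $t_G$ under the automorphism, combined with its action by inversion on the order-$3$ center, giving $2\,t_G=0$ and hence $t_G=0$ — is where the proof really turns; the remaining implications are bookkeeping around Proposition \ref{P:tits} and the $\fraksl(3)$ computation already carried out in Lemma \ref{L29}.
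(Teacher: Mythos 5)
Your proof is correct, and for the forward implication of (i), for (ii), and for (iii) it coincides with the paper's argument: $\beta_G=1$ gives $\beta_H=1$ by Proposition \ref{P:tits}, hence $\calA\cong M_3(K)$ and the explicit $\tau$ from the proof of Lemma \ref{L29} furnishes the rational outer automorphism of $\frakh$ and of $\frakg$; parts (ii) and (iii) are then the same bookkeeping as in the paper. Where you genuinely diverge is the converse in (i). The paper argues representation-theoretically: a rational outer automorphism interchanges the two $27$-dimensional representations, so $\beta_G(\varpi)=\beta_G(\varpi')$; since $\varpi+\varpi'\in\Lambda_r$ these classes are mutually inverse and of order dividing $3$, forcing both to be trivial. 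You instead work with the cohomological Tits class: the rational outer automorphism fixes the distinguished class $c_G$ (it acts trivially on $H^1(k,A)$, and $c_G$ is the unique point of its fiber over $[\frakg^{qs}]$), it acts by inversion on $C\cong\mu_3$ and hence on $H^2(k,C)$, so functoriality of $\delta_G$ gives $2\,t_G=0$, and $3$-torsion gives $t_G=0$; then $\beta_G=1$ by \eqref{DF}. The two mechanisms are equivalent via \eqref{DF} --- both exploit that an outer automorphism inverts an order-$3$ invariant which a rational automorphism must fix --- but your route leans on the cohomological interpretation of $\beta_G$ and on the uniqueness of $c_G$ in its fiber, while the paper's works directly with the Tits algebras $\calA_\varpi$ and $\calA_{\varpi'}$. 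Both are complete proofs.
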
 
\begin{proof} 
  If $G_{sc}(k)$ has an outer automorphism then
  $\beta_G(\varpi)=\beta_G(\varpi')$. Hence $\beta_G=1$, since
  $\beta_G(\varpi)$ and $\beta_G(\varpi')$ are inverses of each other,
  and of order dividing 3.  If $\beta_G=1$, then $\beta_H=1$ by
  Proposition \ref{P:tits}.  Since $\beta_H=1$, the factor of $\frakh$
  of type $\rA_2$ is defined by means of a unitary involution on the
  matrix algebra $M_3(K)$. But then $\frakg$ has an outer automorphism
  defined over $k$, as constructed in the proof of Lemma
  \ref{L29}. This proves (i).  Part (ii) now also follows, as
  existence of the outer automorphism for both algebras is equivalent
  to the vanishing of $\beta_G=\beta_H$.  Part (iii) follows from the
  fact that $\Br_3(\bbR)=\Br_3(\bbC)=1$, hence $\beta_G$ is trivial.
\end{proof}

\subsection{} \label{S51}
Assume that $(\frakh, \frakg)$ be a Tits pair such that  $\frakg$ is of type $\rE_6$.  
We have a commutative diagram
\[
\xymatrix{ 1 \ar[r] & H(\ok)  \ar[r] \ar[d] &
  B(\ok) \ar[r] \ar[d] & \bbZ/2\bbZ \ar[r] \ar@{=}[d] & 1 \\ 
1 \ar[r] & G(\ok) \ar[r] & A(\ok) \ar[r] &
     \bbZ/2\bbZ \ar[r] & 1}
\]
where the two horizontal rows are exact sequences. This in turn gives a commutative diagram
\begin{equation} \label{X}
\xymatrix{1  \ar[r] & H(k) \ar[r]\ar[d] & B(k)  \ar[r] \ar[d] & \bbZ/ 2 \bbZ \ar[r] \ar@{=}[d] & H^1(k,H )
  \ar[r]^{j_H} \ar[d]_{\alpha} & H^1(k,B ) \ar[r] \ar[d]_{\gamma} & H^2(k,  \bbZ/2\bbZ) \ar@{=}[d]  
  \\ 
1 \ar[r]  &  G(k) \ar[r] &  A(k) \ar[r] & \bbZ/ 2 \bbZ \ar[r] & H^1(k,G ) \ar[r]^{j_G} & H^1(k,A ) \ar[r] &  H^2(k,  \bbZ/2\bbZ)}
\end{equation}

\begin{lemma}\label{L:reduction}  Let $x\in H^1(k,G)$ and $y=j_G(x)\in H^1(k,A)$.  Then $j_H$ is a bijection from 
$\alpha^{-1}(x)$ to $\gamma^{-1}(y)$.   
\end{lemma}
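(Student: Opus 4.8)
The plan is to treat \eqref{X} as a commutative ladder of exact sequences of pointed sets and to compare, fiber by fiber, the two horizontal maps $j_H$ and $j_G$. Well-definedness is immediate from the central square: if $z\in\alpha^{-1}(x)$ then $\gamma(j_H(z))=j_G(\alpha(z))=j_G(x)=y$, so $j_H(z)\in\gamma^{-1}(y)$. The real content is to show that the restricted map $j_H\colon\alpha^{-1}(x)\to\gamma^{-1}(y)$ is both injective and surjective, and for this I would run the standard twisting argument of Section \ref{S2.1} simultaneously on both rows.

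Two inputs drive the argument. First, from the discussion in Section \ref{S2.1} applied to each of the rows $1\to H\to B\to\bbZ/2\bbZ\to 1$ and $1\to G\to A\to\bbZ/2\bbZ\to 1$, every fiber of $j_H$ (resp.\ of $j_G$) has exactly one or two elements: it is a single point precisely when the corresponding form $\frakh'$ (resp.\ $\frakg'$) carries an outer automorphism defined over $k$, and has two elements otherwise. Second, any $z\in\alpha^{-1}(x)$ corresponds to a form $\frakh'$ sitting inside the form $\frakg'$ determined by $x$, as a Tits pair, so Proposition \ref{P:outer}(ii) equates the existence of rational outer automorphisms for $\frakh'$ and $\frakg'$. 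Hence the fiber sizes match, $|j_H^{-1}(j_H(z))|=|j_G^{-1}(j_G(\alpha(z)))|$ for every $z$. Twisting the whole diagram by a cocycle representing $z$ then identifies each fiber with the image of the connecting map out of $H^0(k,\bbZ/2\bbZ)=\bbZ/2\bbZ$; since the $\bbZ/2\bbZ$ and the rightmost vertical arrows in \eqref{X} are identities, $\alpha$ intertwines these connecting maps, so when a fiber of $j_H$ has two elements $\alpha$ carries them to the two distinct elements of the corresponding two-element fiber of $j_G$.

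With these facts the two halves follow. For injectivity, if $z_1,z_2\in\alpha^{-1}(x)$ satisfy $j_H(z_1)=j_H(z_2)$ they share a fiber of $j_H$; a one-element fiber forces $z_1=z_2$, while in a two-element fiber the two members have distinct $\alpha$-images, so at most one lies in $\alpha^{-1}(x)$. For surjectivity, take $w\in\gamma^{-1}(y)$; since $y=j_G(x)$ lies in the image of $j_G$, exactness of the bottom row sends $y$ to the distinguished point of the final term $H^2(k,\bbZ/2\bbZ)$, and by the identity vertical arrow and $\gamma(w)=y$ the same holds for $w$, so $w=j_H(z_1)$ by exactness of the top row. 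Then $j_G(\alpha(z_1))=\gamma(w)=y$, placing $\alpha(z_1)$ in $j_G^{-1}(y)$ alongside $x$; if $\alpha(z_1)=x$ we are done, and otherwise $j_G^{-1}(y)$, hence $j_H^{-1}(w)$, has two elements whose $\alpha$-images are exactly $\{x,\alpha(z_1)\}$, so the remaining preimage of $w$ lies in $\alpha^{-1}(x)$. (In particular this shows $\alpha^{-1}(x)=\emptyset$ forces $\gamma^{-1}(y)=\emptyset$, so the empty case is handled.) I expect the main obstacle to be the two-element-fiber bookkeeping: one must check, via the twisting identification and the identity action on $\bbZ/2\bbZ$, that $\alpha$ genuinely separates the two members of a $j_H$-fiber onto the two members of the matching $j_G$-fiber, and it is precisely here that Proposition \ref{P:outer}(ii) is indispensable, guaranteeing that the ``defect'' of $j_H$ and of $j_G$ occur in lockstep.
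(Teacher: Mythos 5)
Your proof is correct and follows essentially the same route as the paper's: a diagram chase in \eqref{X} to show every element of $\gamma^{-1}(y)$ lies in the image of $j_H$, then twisting and Proposition \ref{P:outer}(ii) to see that $\alpha$ identifies each fiber $j_H^{-1}(z)$ with $j_G^{-1}(y)$. You have simply written out explicitly the one-or-two-element fiber bookkeeping that the paper compresses into its final sentence.
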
 
\begin{proof} Let $z\in \gamma^{-1}(y)$.  A diagram chase shows that $z=j_H(w)$ for some $w\in H^1(k,H)$. 
After twisting by $w$, we can assume that $y=[\frakg]$ and $z=[\frakh]$. 
Now the diagram (\ref{X}) and Proposition \ref{P:outer}, part (ii) imply that $\alpha$ gives a bijection between 
$j_{H}^{-1}(z)$ and $j_G^{-1}(y)$. 
 \end{proof} 

\begin{lemma}\label{L:reduction2}  The Tits construction is surjective (respectively injective) if and only if 
the map $\alpha$ is surjective (respectively injective) for all pairs $(\frakh,\frakg)$.   
\end{lemma}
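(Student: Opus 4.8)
The plan is to derive all four implications from Lemma \ref{L:reduction}, which identifies the fibres of $\gamma$ over the inner locus $\mathrm{im}(j_G)=\ker\partial_A$ with the fibres of $\alpha$; here $\partial_A,\partial_B$ denote the two rightmost horizontal maps of diagram (\ref{X}). The two implications asserting that surjectivity (resp. injectivity) of $\gamma$ forces the same for $\alpha$ are immediate and use no twisting: for $x\in H^1(k,G)$ and $y=j_G(x)$, Lemma \ref{L:reduction} gives $|\alpha^{-1}(x)|=|\gamma^{-1}(y)|$, so a surjective $\gamma$ makes every such fibre nonempty and an injective $\gamma$ makes every such fibre a singleton, whence $\alpha$ is surjective (resp. injective). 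The reverse implications are where twisting enters, and the hypothesis ``for all pairs'' is used precisely because twisting a Tits pair $(\frakh,\frakg)$ produces a new Tits pair $(\frakh_1,\frakg_1)$ to which Lemma \ref{L:reduction} must again be applied.

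For injectivity, I would assume $\alpha$ is injective for every pair and take $z_1,z_2$ with $\gamma(z_1)=\gamma(z_2)=y$. Twisting the entire diagram (\ref{X}) by $z_1$ replaces $(\frakh,\frakg)$ by a Tits pair $(\frakh_1,\frakg_1)$, sends both $z_1$ and $y$ to base points, and sends $z_2$ to some $z_2'$ with $\gamma_1(z_2')=\ast$. Since the base point of $H^1(k,A_1)$ is $j_{G_1}(\ast)$, Lemma \ref{L:reduction} applied to this pair with $x=\ast$ gives a bijection $\alpha_1^{-1}(\ast)\to\gamma_1^{-1}(\ast)$; injectivity of $\alpha_1$ forces $\alpha_1^{-1}(\ast)=\{\ast\}$, hence $z_2'=\ast$ and $z_1=z_2$.

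For surjectivity I would assume $\alpha$ is surjective for every pair, take an arbitrary $y\in H^1(k,A)$, and set $c=\partial_A(y)$. If $c$ is trivial then $y=j_G(x)$ lies in the inner locus and Lemma \ref{L:reduction} together with $\alpha^{-1}(x)\neq\emptyset$ gives $y\in\mathrm{im}(\gamma)$. The essential case is $c\neq 0$, where $y$ is an outer form and Lemma \ref{L:reduction} does not apply directly. To handle it I first realize $c$ on the $\frakh$-side: Proposition \ref{P35} produces, for the relevant quadratic extension, a quasi-split Tits pair, and hence a class $z_0\in H^1(k,B)$ with $\partial_B(z_0)=c$ and $\gamma(z_0)=:y_0$ satisfying $\partial_A(y_0)=c$. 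Twisting (\ref{X}) by $z_0$ then carries $y$ to a class $y'$ with trivial invariant, i.e. into the inner locus of the twisted pair $(\frakh_1,\frakg_1)$; writing $y'=j_{G_1}(x')$ and using surjectivity of $\alpha$ for this pair, Lemma \ref{L:reduction} yields $\gamma_1^{-1}(y')\neq\emptyset$, and untwisting gives $y\in\mathrm{im}(\gamma)$.

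The step I expect to be the main obstacle is this last, outer case of surjectivity: one must show that every value of the $\ast$-invariant $\partial_A$ is already attained by $\partial_B$, so that the reduction to the inner locus can even be started. This is exactly what Proposition \ref{P35} supplies, by giving a Tits pair for each quadratic extension. Beyond that, the only thing to verify carefully is that twisting is compatible with the whole commutative diagram (\ref{X})—so that the twisted squares are again of the form furnished by Lemma \ref{L:reduction} for the Tits pair $(\frakh_1,\frakg_1)$—which is routine, since forms of a Tits pair are again Tits pairs.
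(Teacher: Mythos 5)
Your argument is correct and is essentially the paper's proof, just written out in full: the paper simply says that for any $y\in H^1(k,A)$ one may choose the base Tits pair (split or quasi-split, which is where Proposition \ref{P35} enters) so that $y=j_G(x)$, and then invokes Lemma \ref{L:reduction}; your explicit twisting by $z_1$ (for injectivity) and by the quasi-split class $z_0$ realizing the outer invariant (for surjectivity) is the same reduction phrased as a change of base point.
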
 
\begin{proof} Let $y\in H^1(k,A)$. We can pick a pair $(\frakh,\frakg)$ such that $y=j_G(x)$ for some $x\in H^1(k,G)$.  
For example, we can pick both algebras split or quasi-split. Lemma now follows from Lemma \ref{L:reduction}.
 \end{proof}

\section{Real Lie algebras}

In this section, all groups and Lie algebras are defined over $\bbR$
unless otherwise stated.

\subsection{Real forms of type $\rG_2$} 
The compact real Lie algebra
$\frakg_2^\rmc=\Der(\bbO^\rmc)$ and the split real Lie algebra $\frakg_2^\rms=\Der(\bbO^\rms)$ are
the only real Lie algebras of type $\rG_2$. 

\subsection{Real Jordan algebras}
Recall, from Section \ref{S26}, the three Jordan algebras
$\calJ^\rms$, $\calJ^\rmc$ and $\calJ^{\rmII}$ constructed from a
composition algebra of dimension $d$.  Then $\Der(\calJ^\rms)$ is
split, $\Der(\calJ^\rmc)$ is compact, and $\Der(\calJ^{\rmII})$ has
split rank one, as specified by the following table.

\[
\mbox{Table1} \ \ \
\begin{array}{|c||c|c|c|c|}
\hline d & 1 & 2 & 4 & 8 \\ 
\hline 
\Der(\calJ^\rmII) & \frakso_{1,2}(\bbR) & \fraksu_{1,2}(\bbC) & 
      \fraksu_{1,2}(\bbH) & \frakf_{4,1} \\
\hline
\end{array}
\]
In the literature, $\frakf_{4,1}$ is also denoted by $\frakf_{4(-20)}$.

\subsection{ Tits construction over real numbers} \label{S61} Let
$\frakg= \frakg(\bbO, \calJ)$ be a real exceptional Lie algebra
obtained by the Tits construction in Section~\ref{S28}. We shall
determine this real form by computing the signature of the Killing
form. This is the same as computing the trace of the corresponding
Cartan involution.  Let $\frakh =\Der(\bbO)\oplus \Der(\calJ)\subseteq
\frakg$.  Let $\theta_H$ be a Cartan involution of $\frakh$
corresponding to a maximal compact subalgebra of $\frakh$. Since the
maximal compact subalgebra of $\frakh$ is contained in a maximal
compact subalgebra of $\frakg$, the Cartan involution $\theta_H$
extends to a Cartan involution $\theta_G$ of $\frakg$ corresponding to
the maximal compact subalgebra of $\frakg$.  We claim that $\theta_G$
is given by the natural action of $\theta_H\in \Aut(\frakh) \subseteq
\Aut(\frakg)$ on $\frakg$.  Indeed, $\theta_H\theta_G^{-1}\in
\Aut(\frakg)$ centralizes $\frakh$. By Lemma \ref{L29}, the centralizer
of $\frakh$ in $\Aut(\frakg)$ is trivial and the claim follows. Using the
explicit action of $\theta_H$ on $\bbO^0\otimes \calJ^0$, it is not
difficult to compute the trace of $\theta_G$ on $\frakg$. We tabulate
the results (\cite{Ja}):

%\medskip 

% \centerline{Table 2}
\[
\begin{array}{|c||c|c|c|c|c|c|c|c|c|c|c|}
  \hline \bbO & \calJ^\rmc(1) &  \calJ^\rms(1) & \calJ^\rmc(2) & \calJ^\rmII(2) &
  \calJ^\rms(2) & \calJ^\rmc(4) & \calJ^\rmII(4) & \calJ^\rms(4) &  \calJ^\rmc(8) &
  \calJ^\rmII(8) & \calJ^\rms(8) \\  
  \hline \bbO^\rmc  & \frakf_4^\rmc & \frakf_{4,1} & \frake_6^\rmc &
  \frake_{6(-14)} &  \frake_{6(-26)} & \frake_7^\rmc & \frake_{7,4} &
  \frake_{7,3} & \frake_8^\rmc & \frake_8^\rms & \frake_{8,4} \\ 
  \bbO^\rms & \frakf_4^\rms & \frakf_4^\rms & \frake_{6,4} &
  \frake_{6,4} & \frake_6^\rms & \frake_{7,4} & \frake_{7,4} &
  \frake_7^\rms & \frake_{8,4} & \frake_{8,4} & \frake_8^\rms
  \\ \hline
\end{array}
\]
\centerline{Table 2}

In the above table, $d$ in $\calJ(d)$ is the dimension of the
composition algebra used to construct~$\calJ$.  The upper-scripts s
and c denote the split and compact Lie algebras, respectively.  The
algebra $\frake_{6(-14)}$ (respectively $\frake_{6(-26)}$) is the
unique Lie algebra of type $\rE_6$ with real rank 2 whose Cartan
decomposition $\frakk \oplus \frakp$ satisfies $\dim \frakp - \dim
\frakk = -14$ (respectively $-26$).  By inspection, we see that Table
2 contains all real exceptional Lie algebras of type $\rF_4$ and
$\rE_n$.  Thus we have obtained the following (see \cite{Kv} for a
more thorough discussion of dual pairs in $\frakg$ of type~$\rE_n$).

\begin{prop} If $k$ is an archimedean  field, then $\gamma: H^1(k,B) \longrightarrow H^1(k,A)$ is a surjection.  
If $k\cong\bbC$ then $\gamma$ is a bijection. 
If $k\cong\bbR$ then  $\gamma$ is given by Table 2. 
\end{prop}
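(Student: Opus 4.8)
The plan is to prove the three assertions of the final Proposition---surjectivity of $\gamma$ over any archimedean $k$, bijectivity when $k\cong\bbC$, and the explicit description by Table~2 when $k\cong\bbR$---by reducing everything to the concrete classification of real and complex forms already assembled in Section~6. First I would dispose of the complex case. Over $k\cong\bbC$ every semisimple Lie algebra is split, so there is a single isomorphism class of $\frakg$ of each type $\rF_4,\rE_6,\rE_7,\rE_8$ and a single class of $\frakh$; moreover $H^1(\bbC,B)$ and $H^1(\bbC,A)$ are trivial pointed sets because $\Gamma_{\bbC}$ is trivial. Hence $\gamma$ is automatically a bijection, and this disposes of the second assertion with essentially no work.

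The substance is the real case. Here the strategy is to read off both sides of $\gamma$ from the tables of Section~6. The domain $H^1(\bbR,B)\cong H^1(\bbR,\Aut(\frakh))$ classifies real forms of $\frakh=\Der(\bbO)\oplus\Der(\calJ)$, which by Sections~6.1--6.2 are enumerated by the choice of a real octonion algebra ($\bbO^\rmc$ or $\bbO^\rms$) together with a real central simple cubic Jordan algebra ($\calJ^\rmc$, $\calJ^\rms$, or $\calJ^{\rmII}$ in each admissible dimension $d\in\{1,2,4,8\}$). The codomain $H^1(\bbR,A)$ classifies real forms of $\frakg$, i.e.\ the real Lie algebras of type $\rF_4$ or $\rE_n$. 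By the discussion in Section~6.3, the map $\gamma$ is exactly the Tits construction, and its effect on these labelled data is precisely the content of Table~2: each entry records $\frakg(\bbO,\calJ)$ for the corresponding pair. So the third assertion is literally the statement that $\gamma$ is given by Table~2, and I would justify it by invoking the Killing-form signature computation sketched in Section~6.3 (the extension of $\theta_H$ to $\theta_G$ via Lemma~\ref{L29}), which fills in every cell.

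Surjectivity then follows by inspection: I would verify that the list of real forms appearing as entries of Table~2 exhausts all real exceptional Lie algebras of type $\rF_4$ and $\rE_n$. This is exactly the remark ``By inspection, we see that Table~2 contains all real exceptional Lie algebras of type $\rF_4$ and $\rE_n$'' in Section~6.3; since every real form of $\frakg$ occurs as some $\gamma([\frakh'])$, the map is onto $H^1(\bbR,A)$. For general archimedean $k$ there are only the two possibilities $k\cong\bbR$ and $k\cong\bbC$, so surjectivity in the first assertion is covered by combining the two special cases.

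The main obstacle is not logical but bookkeeping: one must be confident that Table~2 is complete and correct, i.e.\ that the signature computation correctly identifies each $\frakg(\bbO,\calJ)$ and that no real form of $\rF_4$ or $\rE_n$ is missing from the table. The delicate points are distinguishing the forms of type $\rE_6$ with equal real rank (for instance $\frake_{6(-14)}$ versus $\frake_{6(-26)}$), which forces one to track the signed quantity $\dim\frakp-\dim\frakk$ rather than merely the real rank, and noticing the coincidences where distinct Jordan data produce isomorphic $\frakg$ (so that $\gamma$ is \emph{not} injective over $\bbR$, e.g.\ the repeated entries $\frakf_4^\rms$ and $\frake_{7,4}$ in the split-octonion row). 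Once the table is certified, the Proposition is immediate.
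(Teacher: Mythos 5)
Your proposal matches the paper's argument essentially verbatim: the complex case is trivial since $H^1(\bbC,\cdot)$ vanishes, and the real case is handled by identifying $H^1(\bbR,B)$ with pairs $(\bbO,\calJ)$ via Lemma \ref{L29}, computing each $\frakg(\bbO,\calJ)$ through the signature of the Killing form (equivalently the trace of the Cartan involution $\theta_G$ extending $\theta_H$), and checking by inspection that Table 2 exhausts the real forms of $\rF_4$ and $\rE_n$. This is exactly how Section \ref{S61} establishes the proposition, so no further comment is needed.
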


\subsection{Real forms of type $\rE_6$}\label{S6.5}
Recall that $H$ and $G$ are the connected components of $B=\Aut(\frakh)$ and $A=\Aut(\frakg)$, respectively. 
We have a commutative diagram
\[
\xymatrix{  H^1(\bbR,H )
  \ar[r]^{j_H} \ar[d]_{\alpha} & H^1(\bbR,B )  \ar[d]_{\gamma} & \\
    H^1(\bbR,G ) \ar[r]^{j_G} & H^1(\bbR,A )  }
\]

By Proposition \ref{P:outer}(iii)  the maps $j_H$ and $j_G$ are injections. 
 Thus, the elements of $H^1(\bbR,H)$ and $H^1(\bbR,G)$ can be identified with the inner forms of $\frakh$ and $\frakg$ 
respectively. Hence the map $\alpha$ is also given by Table 2. In particular, it is surjective.

\section{Rational forms over $p$-adic fields} \label{S4}

\subsection{} \label{S41} 
In this section $k$  is a $p$-adic field. There is a unique (split) octonion algebra over $k$, 
denoted by $\bbO$.  The algebra $\Der(\bbO)$ is a split simple Lie algebra of type $\rG_2$. 
Let $\calJ$ be a cubic Jordan algebra over $k$ and 
$\frakg=\frakg(\bbO,\calJ)$ be the corresponding exceptional Lie algebra obtained by the Tits construction. 
If $\calJ$ is split, then $\Der(\calJ)$ is also split. Other cases are tabulated below: 
\[
\mbox{Table 3}  \ \ \ 
\begin{array}{|c||c|c|c|c|}
\hline \calJ & \calH_3(k,\rho)  & D^+ & \calH_3(K) & \calH_3(Q) \\ 
\hline 
\Der(J) & \frakso_3 (k) & \fraksl_{1}(D) & \fraksu_{3}(K) & 
      \fraksu_{3}(Q) \\
\hline
\end{array}
\]
In the table $D$ is a cubic division algebra over $k$ and $\fraksl_1(D)$ is the Lie algebra
consisting of the trace zero elements in $D$; $K$ is a quadratic extension of $k$, and $Q$ is the unique
 quaternion algebra over $k$.  Over the field $k$ there are two cubic division algebras. If $D$ is one then 
 the other is its opposite $D^{\circ}$. 
 The algebras $\fraksl_{1}(D)$ and $\fraksl_{1}(D^{\circ})$ are isomorphic by the map $x\mapsto -x$ for all 
 $x\in \fraksl_1(D)$.

\subsection{}    By a result of Kneser, see \cite{Kn},  $H^1(k, H_{sc})$ and $H^1(k, G_{sc})$ are trivial. 
Hence the commutative diagram (\ref{BIG})  gives a commutative diagram 

\[
\xymatrix{ H^1(k, H) \ar[r]^{\delta_H} \ar[d]_{\alpha} & 
H^2(k, C_H) \ar[d] \\ 
H^1(k, G) \ar[r]^{\delta_G} & H^2(k, C_G)}
\]
where horizontal maps are bijections. The following proposition now
follows from Lemma~\ref{L:reduction}.

\begin{prop} \label{P73} If $k$ is a $p$-adic field, then $\gamma:
  H^1(k,B) \longrightarrow H^1(k,A)$ is a surjection. It is a
  bijection if $\frakg$ is not of type $\rF_4$.
\end{prop}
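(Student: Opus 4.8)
The plan is to transfer the question about $\gamma$ to the analogous question about the inner map $\alpha\colon H^1(k,H)\to H^1(k,G)$, and then to compute $\alpha$ directly from the coboundary maps. By Lemma~\ref{L:reduction2}, $\gamma$ is surjective (respectively injective) precisely when $\alpha$ is surjective (respectively injective) for every Tits pair. When $\frakg$ has type $\rE_7$, $\rE_8$ or $\rF_4$, the Dynkin diagram has no symmetry, so $\frakg$ admits no outer automorphism, $A=G$, $B=H$, and hence $\gamma=\alpha$ outright; only the type $\rE_6$ case, where an outer automorphism genuinely occurs, requires the full strength of Lemma~\ref{L:reduction2}.

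I would then read off $\alpha$ from the commutative square
\[
\xymatrix{ H^1(k, H) \ar[r]^{\delta_H} \ar[d]_{\alpha} & H^2(k, C_H) \ar[d] \\ H^1(k, G) \ar[r]^{\delta_G} & H^2(k, C_G)}
\]
whose horizontal coboundary maps are bijections by Kneser's vanishing of $H^1$ for simply connected groups over $p$-adic fields. Thus $\alpha$ is identified with the vertical arrow on the right, namely the map $H^2(k,C_H)\to H^2(k,C_G)$ induced by the inclusion $C_H\to C_G$ of diagram~\eqref{BIG}. The proof now reduces to a computation of this map in degree-two Galois cohomology, carried out according to the type of $\frakg$.

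For $\frakg$ of type $\rE_6$, $\rE_7$ or $\rE_8$ we have $C_H=C_G=C$ with $C_H\to C_G$ the identity, so the induced map on $H^2$ is the identity and $\alpha$ is a bijection; by the first paragraph $\gamma$ is then a bijection as well. For $\frakg$ of type $\rF_4$ the center $C_G$ is trivial while $C_H$ is cyclic of order $2$, so the relevant map is the zero homomorphism $H^2(k,\bbZ/2\bbZ)\to H^2(k,\{1\})=\{1\}$. Over a $p$-adic field local class field theory gives $H^2(k,\bbZ/2\bbZ)\cong\bbZ/2\bbZ$, which is nontrivial, so this homomorphism is surjective but not injective; hence $\gamma=\alpha$ is surjective but not bijective. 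This is exactly the dichotomy asserted.

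The step I expect to be the main obstacle is the reduction from $\gamma$ to $\alpha$ in the $\rE_6$ case: because $\frakg$ there carries a genuine outer automorphism one cannot equate the two maps, and one must instead appeal to Lemma~\ref{L:reduction2} (itself resting on the fibrewise bijection of Lemma~\ref{L:reduction} extracted from diagram~\eqref{X}) to propagate surjectivity and injectivity from $\alpha$ to $\gamma$. Once that is in place, the remainder is a direct reading of the square above together with the standard value of $H^2(k,\mu_n)$ over a local field.
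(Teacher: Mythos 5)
Your proposal is correct and follows essentially the same route as the paper: reduce $\gamma$ to the inner map $\alpha$ via Lemma~\ref{L:reduction} (trivially for types other than $\rE_6$, since there $A=G$ and $B=H$), use Kneser's vanishing of $H^1$ for simply connected groups to identify $\alpha$ with $H^2(k,C_H)\to H^2(k,C_G)$, and then read off the answer from the centers ($C_H=C_G$ outside type $\rF_4$; $C_H=\mu_2$, $C_G=1$ for $\rF_4$). Your explicit case analysis and the computation $H^2(k,\mu_2)\cong\bbZ/2\bbZ$ only make precise what the paper leaves implicit.
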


The group $H^2(k,C_H)$ can be computed by identifying $C_H$ with a
concrete group as in the following table. In the table, $\mu_3[K]$ is
the group of cube roots of 1, where the usual action of $\Gamma_k$ is
twisted by $\sigma(\zeta)= \zeta^{-1}$ for every $\sigma\in
\Gamma_{k}\setminus\Gamma_K$.  The data given in the last row follow
from results in local class theory, such as $H^2(k, \mu_n)=Br_n(k)=
\mathbb Z/n\mathbb Z$. The group $H^2(k,\mu_n[K] )$, for $n$ odd,
parameterizes division algebras over $K$ of degree dividing $n$, that
admit a unitary involution, see (30.15) in \cite{BoI}. On the other
hand, a division algebra of degree dividing $n$ over $K$ admits a
unitary involution if and only if it is in the kernel of the
corestriction map from $Br_n(K)$ to $Br_n(k)$. However this map is a
bijection for the $p$-adic field $k$ and $n$ odd, hence
$H^2(k,\mu_n[K] )$ is trivial.
\[
\mbox{Table 4}  \ \ \ 
\begin{array}{|c||c|c|c|c|c|}
\hline G &  \rF_4 & \rE_6 & \rE_6^K & \rE_7 & \rE_8 \\ 
\hline 
C_H& \mu_2 & \mu_3 & \mu_3[K]  &  \mu_2 & 1 \\
       \hline 
H^2(k,C_H)& \bbZ/2\bbZ   & \bbZ/3\bbZ & 1 & 
      \bbZ/2\bbZ & 1 \\
\hline
\end{array}
\]
Completeness of data in Table 3 follows from Table 4. We have a unique dual pair, except when $\frakg$ has type $\rF_4$. 
If $\frakg$ is of type $\rF_4$, then it is split and it contains two conjugacy classes of dual pairs, with split and non-split $\frakso_3(k)$.

\section{Rational forms over global fields} \label{S7}

\subsection{} We have a commutative diagram 
\[
\xymatrix{  
& H^1(k,H )
  \ar[r]^{i_H} \ar[d]_{\alpha} & \prod_v H^1(k_v,H ) \ar[r]^{ \prod_v \delta_{H_v}} \ar[d]_{\prod_v\alpha_v} &
    \prod_v H^2(k_v,C_H)  \ar[d] & \\ 
  & H^1(k,G ) \ar[r]^{i_G} &  \prod_v H^1(k_v,G) \ar[r]^{\prod_v \delta_{G_v}} &
   \prod_v H^2(k_v,C_G) & }
\]
where $\prod_v$ denotes  a restricted direct product of pointed sets over all places $v$ of $k$, i.e. if $(x_v)$ is an element of the product, then $x_v$ is the distinguished point for almost all $v$. The maps 
 $i_H$ and $i_G$ are injections (\cite{PR} Theorem 6.22) while $\prod_v\alpha_v$ is a surjection. 
 The image of $i_H$ is equal to the inverse image of $H^2(k,C_H)\subseteq  \prod_v  H^2(k_v,C_H)$ (\cite{Kz}, 2.6 Proposition). The same also applies to the image of $i_G$.
 Now a simple diagram chase shows that $\alpha$ is a surjection for every pair $(\frakh,\frakg)$.
  It follows, from Lemma \ref{L:reduction2}, that 
\[ 
\gamma: H^1(k,B)\longrightarrow  H^1(k,A)
\] 
is surjective. 
The question of fibers of $\gamma$ can be translated to the question of fibers for $\alpha$ by Lemma \ref{L:reduction}. 
If $\frakg$ is not of type $\rF_4$ then, by Proposition \ref{P73},  
$\alpha_v$ is a bijection for every finite place, hence the fibers of $\alpha$ can be identified with the fibers of 
$\alpha_{\infty}: H^1(k_{\infty}, H) \rightarrow H^1(k_{\infty},G)$. These are the same as the fibers of 
$\gamma_{\infty}: H^1(k_{\infty}, B) \rightarrow H^1(k_{\infty},A)$ by Section  \ref{S6.5}. Theorem~\ref{main} is proved.

\subsection{} We shall now show how to construct  given $\frakg$ using the Tits construction. Let 
 $\bbO$ be an octonion $k$-algebra. By the Hasse principle for  $\rG_2$ the octonion algebra $\bbO$ is specified by isomorphism classes 
of its completions $\bbO_v$ for all real places $v$.   

\begin{enumerate}[(a)]
\item{$\rF_4$} Let $\calA=\rM_3(k)$.  Let $\calJ=S(\sigma,\calA)$ where 
$\sigma$ is an orthogonal involution of $\calA$.  Pick $\bbO$ and the involution $\sigma$ such that 
 $\frakg_v\cong \frakg(\bbO_v,\calJ_v)$ for all real places $v$.  Then $\frakg\cong \frakg(\bbO,\calJ)$.

\item{$\rE_6$} (inner) Let $\varpi$ be the highest weight of an irreducible representation of dimension 27. Let $\calA$ be 
a central simple algebra of degree 3 such that $[\calA]=\beta_G(\varpi)\in \Br_3(k)$. Let $\calJ=\calA^+$.  
  Pick $\bbO$ such that 
 $\frakg_v\cong \frakg(\bbO_v,\calJ_v)$ for all real places $v$.  Then $\frakg\cong \frakg(\bbO,\calJ)$. 
 
\item{$\rE_6$} (outer) Let $\varpi$ be the highest weight of an irreducible representation of dimension 27. Let $\calA$ be 
a central simple algebra of degree 3 over $K$ such that $[\calA]=\beta_G(\varpi)\in \Br_3(K)$. Let $\calJ=S(\sigma,\calA)$ where 
$\sigma$ is a unitary involution of $\calA$.  Pick $\bbO$ and the involution $\sigma$ such that 
 $\frakg_v\cong \frakg(\bbO_v,\calJ_v)$ for all real places $v$.  Then $\frakg\cong \frakg(\bbO,\calJ)$.

\item{$\rE_7$} Let $\varpi$ be the highest weight of the irreducible representation of dimension 56. Let $\calA$ be 
a central simple algebra of degree 6 such that $[\calA]=\beta_G(\varpi)\in \Br_2(k)$. Let $\calJ=S(\sigma,\calA)$ where 
$\sigma$ is a symplectic involution of $\calA$.  Pick $\bbO$ and the involution $\sigma$ such that 
 $\frakg_v\cong \frakg(\bbO_v,\calJ_v)$ for all real places $v$.  Then $\frakg\cong \frakg(\bbO,\calJ)$. 
 
 \item{$\rE_8$}   By the Hasse principle for  $\rF_4$,  an exceptional Jordan  $k$-algebra $\calJ$ is specified by isomorphism classes 
of its completions $\calJ_v$ for all real places $v$. 
Pick $\bbO$ and the algebra $\calJ$ such that 
 $\frakg_v\cong \frakg(\bbO_v,\calJ_v)$ for all real places $v$.  Then $\frakg\cong \frakg(\bbO,\calJ)$. 

\end{enumerate} 

We refer the reader to the book by Scharlau \cite{Sc}, where existence of involutions  with prescribed local desiderata  is discussed.  

\medskip

\end{document}